\documentclass[11pt,oneside,reqno]{amsart}

\hoffset=-0.7in \textwidth=6.4in \textheight=8.8in

\usepackage{amsmath,amssymb,amsthm,textcomp}
\usepackage{amsfonts,graphicx}
\usepackage[mathscr]{eucal}
\pagestyle{plain}
\usepackage{color}
\usepackage{csquotes}
\usepackage[backend=bibtex,%
firstinits=true,%
doi=false,%
isbn=true,%
url=false,%
maxnames=99]{biblatex}%

\vfuzz=30pt
\AtEveryBibitem{\clearfield{issn}}
\AtEveryCitekey{\clearfield{issn}}
\addbibresource{mybibfile.bib}
\interdisplaylinepenalty=0

\numberwithin{equation}{section}
\DeclareNameAlias{sortname}{last-first}
\theoremstyle{definition}
\usepackage{mathtools}
\addtolength{\topmargin}{-0.7in}
\addtolength{\textheight}{0.4in}

\numberwithin{equation}{section}


\newcommand{\ncom}{\newcommand}

\ncom{\beq}{\begin{equation}}
	\ncom{\eeq}{\end{equation}}
\ncom{\bea}{\begin{eqnarray*}}
	\ncom{\eea}{\end{eqnarray*}}
\ncom{\beqa}{\begin{eqnarray}}
	\ncom{\eeqa}{\end{eqnarray}}
\ncom{\nno}{\nonumber}
\ncom{\non}{\nonumber}
\ncom{\ds}{\displaystyle}
\ncom{\half}{\frac{1}{2}}
\ncom{\mbx}{\makebox{.25cm}}
\ncom{\hs}{\mbox{\hspace{.25cm}}}
\ncom{\rar}{\rightarrow}
\ncom{\Rar}{\Rightarrow}
\ncom{\noin}{\noindent}
\ncom{\bc}{\begin{center}}
	\ncom{\ec}{\end{center}}
\ncom{\sz}{\scriptsize}
\ncom{\rf}{\ref}
\ncom{\s}{\sqrt{2}}
\ncom{\sgm}{\sigma}
\ncom{\Sgm}{\Sigma}
\ncom{\psgm}{\sigma^{\prime}}
\ncom{\dt}{\delta}
\ncom{\Dt}{\Delta}
\ncom{\lmd}{\lambda}
\ncom{\Lmd}{\Lambda}
\ncom{\Th}{\Theta}
\ncom{\e}{\eta}
\ncom{\eps}{\epsilon}
\ncom{\pcc}{\stackrel{P}{>}}
\ncom{\lp}{\stackrel{L_{p}}{>}}
\ncom{\dist}{{\rm\,dist}}
\ncom{\sspan}{{\rm\,span}}
\ncom{\re}{{\rm Re\,}}
\ncom{\im}{{\rm Im\,}}
\ncom{\sgn}{{\rm sgn\,}}
\ncom{\ba}{\begin{array}}
	\ncom{\ea}{\end{array}}
\ncom{\hone}{\mbox{\hspace{1em}}}
\ncom{\htwo}{\mbox{\hspace{2em}}}
\ncom{\hthree}{\mbox{\hspace{3em}}}
\ncom{\hfour}{\mbox{\hspace{4em}}}
\ncom{\vone}{\vskip 2ex}
\ncom{\vtwo}{\vskip 4ex}
\ncom{\vonee}{\vskip 1.5ex}
\ncom{\vthree}{\vskip 6ex}
\ncom{\vfour}{\vspace*{8ex}}
\ncom{\norm}{\|\;\;\|}
\ncom{\integ}[4]{\int_{#1}^{#2}\,{#3}\,d{#4}}
\ncom{\vspan}[1]{{{\rm\,span}\{ #1 \}}}
\ncom{\dm}[1]{ {\displaystyle{#1} } }
\ncom{\ri}[1]{{#1} \index{#1}}

\newtheorem{theorem}{\bf Theorem}[section]
\newtheorem{remark}{\bf Remark}[section]
\newtheorem{proposition}{Proposition}[section]

\newtheoremstyle
{remarkstyle}
{}
{11pt}
{}
{}
{\bfseries}
{:}
{     }
{\thmname{#1} \thmnumber{#2} }

\theoremstyle{remarkstyle}



\def\r{\rangle}

\def\eps{\varepsilon}

\begin{document}
	\title{On the Superposition and Thinning of Generalized Counting Processes}
	\author[Manisha Dhillon]{Manisha Dhillon}
	\address{Manisha Dhillon, Department of Mathematics, Indian Institute of Technology Bhilai, Durg 491002, India.} \email{manishadh@iitbhilai.ac.in}
	\author[Kuldeep Kumar Kataria]{Kuldeep Kumar Kataria}
	\address{Kuldeep Kumar Kataria, Department of Mathematics, Indian Institute of Technology Bhilai, Durg 491002, India.} \email{kuldeepk@iitbhilai.ac.in}
	
	\subjclass[2010]{60G51; 60G55}
	\keywords{superposition; thinning; counting processes; jump probabilities; arrival rates}
	\date{\today}
	\begin{abstract}
		In this paper, we study the merging and splitting of generalized counting processes (GCPs). First, we study the merging of a finite number of independent GCPs and then extend it to the case of countably infinite. The merged process is observed to be a GCP with increased arrival rates. It is shown that a packet of jumps arrives in the merged process according to the Poisson process. Also, we study two different types of splitting of a GCP. In the first type, we study the splitting of jumps of a GCP where the probability of simultaneous jumps in the split components is negligible. In the second type, we consider the splitting of jumps in which there is a possibility of simultaneous jumps in the split components.  It is shown that the split components are GCPs with certain decreased jump rates. Moreover, the independence of split components is established. Later, we discuss applications of the obtained results to industrial fishing problem and hotel booking management system.
	\end{abstract}
\maketitle
\section{Introduction}
Recently, there has been a considerable interest in studying the generalizations of Poisson process. The generalized counting process (GCP) is one among its generalizations which was first introduced and studied by Di Crescenzo {\it et} {\it al}. (2016). It performs $k$ kinds of jumps of size $1,2,\dots,k$ with positive rates $\lambda_1,\lambda_2,\dots,\lambda_k$, respectively. We denote it by $\{M(t)\}_{t\geq0}$. In an infinitesimal interval of length $h$, its jumps probabilities are given by 
\begin{equation*}
	\mathrm{Pr}\{M(t+h)=n+j\big|M(t)=n\}=\begin{cases}
		1-\sum_{j=1}^{k}\lambda_jh+o(h), \ j=0,\vspace{.1cm}\\
		\lambda_j h+o(h), \  1\leq j\leq k,\vspace{.1cm}\\
		o(h), \ j\geq k+1,
	\end{cases}
\end{equation*}	
where $o(h)/h\to0 $ as $h\to0$.
For each $n\geq0$, the state probability of GCP is given by (see Di Crescenzo {\it et al.} (2016))
\begin{equation}\label{gcpnt}
	p(n,t)=\sum_{\Omega(k,n)}\prod_{j=1}^{k}\frac{(\lambda_jt)^{x_j}}{x_j!}e^{-\lambda_j t},
\end{equation}
where $\Omega(k,n)=\{(x_1,x_2,\ldots,x_k):\sum_{j=1}^{k}jx_j=n,\ x_j\in \mathbb{N}_0\}$. Here, $\mathbb{N}_0$ denotes the set of non-negative integers. Also, its probability generating function (pgf) is given by
\begin{equation}\label{pgfgcp}
	G(u,t)=\mathbb{E}\left(u^{M(t)}\right)=\exp\bigg(-\sum_{j=1}^{k}\lambda_{j}(1-u^{j})t\bigg),\  \ |u|\le 1.
\end{equation}
Thus, its mean and variance are 
\begin{equation}\label{meanvar}
	\mathbb{E}(M(t))=\sum_{j=1}^{k}j\lambda_jt\, \, \,  \text{and}\, \, \, \operatorname{Var}(M(t))=\sum_{j=1}^{k}j^2\lambda_jt,
\end{equation}
respectively. 

Kataria and Khandakar $(2022)$ obtained a limiting and a martingale result for the GCP. Also, they established a recurrence relation for its probability mass function (pmf) and discussed an application of the GCP to risk theory. For $k=1$, the GCP reduces to the homogeneous Poisson process. Some other particular cases of the GCP are Poisson process of order $k$ (see Kostadinova and Minkova (2013)), P\'olya-Aeppli process, P\'olya-Aeppli process of order $k$ (see  Chukova and Minkova (2015)) and the negative binomial process (see Kozubowski and Podg\'orski (2009)) {\it etc}.  

It is well known that the merging of two independent Poisson processes is again a Poisson process whose arrival rate is equal to the sum of the arrival rates of merging components. Also, the split components of a Poisson process are again Poisson with certain decreased jump rates. In this paper, we do a similar study for independent GCPs. The paper is organized as follows: 

In Section 2, the merging of independent GCPs is discussed. It is shown that the merged process is a GCP with increased jump rates. First, we consider the merging of finitely many independent GCPs and then extend the study to the case of countably many GCPs. In Section 3, the jump probabilities originating from a merging component are obtained. The cases of finite and countably many independent GCPs are treated separately. In Section $4$, we show that the arrival of a packet of jumps in the merged process is a Poisson process. This result implies that, in a GCP, a packet of jumps arrives according to the Poisson process. 
In Section 5, we study two types of the splitting of jumps in GCP. In both the cases, we study the splitting for two split components and then for $q\geq 2$ split components. Moreover, in the first type of splitting, we show that the split components are independent GCPs and in the second type of splitting, we establish that the split components are GCPs but not necessarily independent.
In the last section, we give applications of the merging and splitting of GCPs to industrial fishing problem and hotel booking management system.

\section{Merging of independent GCPs}\label{Sec1}
Here, we study the merging of finite number of independent GCPs. First, we consider the binary case. Let $ \{M_1(t)\}_{t\geq0}$ and $\{M_2(t)\}_{t\geq0}$ be two independent GCPs such that $\{M_1(t)\}_{t\geq0}$ performs jumps of amplitude $1, 2,\ldots,k_1$ with positive rates $\lambda_1,\lambda_2,\dots,\lambda_{k_1}$ and $\{M_2(t)\}_{t\geq0}$ performs jumps of amplitude $1, 2,\ldots,k_2$ with positive rates $\mu_1,\mu_2,\ldots,\mu_{k_2}$, respectively. The merging of $\{M_1(t)\}_{t\geq0}$ and $\{M_2(t)\}_{t\geq0}$ is a merged process denoted by $\{\mathscr{M}^2(t)\}_{t\geq0}$. It is defined as follows:
\begin{equation*}
	\mathscr{M}^2(t)\coloneqq M_1(t)+M_2(t).
\end{equation*}
In an infinitesimal interval of length $h$, a packet of jumps registers in the merged process  $\{\mathscr{M}^2(t)\}_{t\geq0}$ with some positive probability if and only if the jumps are observed in exactly one of the merging components. It turns out that the probability of simultaneous jumps in both the merging processes is of order $o(h)$. 

The joint probabilities of merging components $\{M_1(t)\}_{t\geq0}$ and $\{M_2(t)\}_{t\geq0}$ in an infinitesimal interval of length $h$ are demonstrated in Figure \ref{fig1}. These probabilities are positive in the shaded region and negligible, that is, of order $o(h)$ in the unshaded region. 
\begin{figure}[htp]
	\includegraphics[width=13cm]{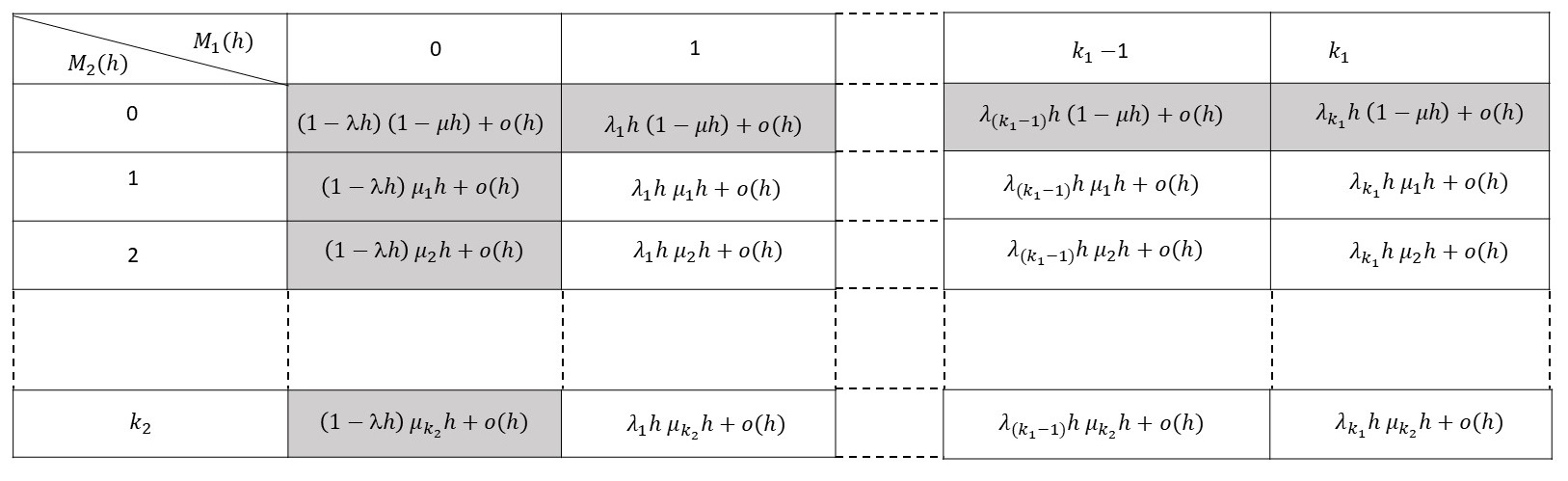}
	\vspace{-.3cm}\caption{Joint probabilities of the merging components in an infinitesimal interval of length $h$.}\label{fig1}
\end{figure}

Let $k=\max\{k_1,k_2\}$. It's evident  that the merged process $\{\mathscr{M}^2(t)\}_{t\geq0}$  performs $k$ kinds of jumps of amplitude $1,2,\dots,k$. Also, the merged process $\{\mathscr{M}^2(t)\}_{t\geq0}$ has independent and stationary increments because the merging components $\{M_1(t)\}_{t\geq0}$ and $\{M_2(t)\}_{t\geq0}$ are independent and have independent and stationary increments. This is shown in the Proposition $\ref{mm}$. We will use the following notation: For a counting process $\{X(t)\}_{t\geq0}$, let	$X(t_1,t_2]\coloneqq X(t_2)-X(t_1), \ t_2>t_1\geq0$.
\begin{proposition}\label{mm}
	The merged process $\{\mathscr{M}^2(t)\}_{t\geq0}$ has independent and stationary increments.
\end{proposition}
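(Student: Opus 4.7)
The plan is to reduce both properties of $\{\mathscr{M}^2(t)\}_{t\geq 0}$ to the corresponding properties of the components $\{M_1(t)\}_{t\geq 0}$ and $\{M_2(t)\}_{t\geq 0}$, using the standing assumption that the two GCPs are independent of each other and each individually has independent and stationary increments.

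For \textbf{independent increments}, I would fix $n\geq 1$ and $0\leq t_0<t_1<\cdots<t_n$ and write
\begin{equation*}
\mathscr{M}^2(t_{i-1},t_i]=M_1(t_{i-1},t_i]+M_2(t_{i-1},t_i],\qquad 1\le i\le n.
\end{equation*}
By the independent-increments property of $\{M_1(t)\}_{t\geq 0}$, the variables $M_1(t_{i-1},t_i]$, $i=1,\dots,n$, are mutually independent, and similarly for $M_2$. Combined with the assumed independence of the two processes, the family $\{M_1(t_{i-1},t_i],M_2(t_{i-1},t_i]:1\le i\le n\}$ consists of $2n$ mutually independent random variables. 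Grouping them into the $n$ disjoint pairs indexed by $i$ and applying the standard fact that sums over disjoint blocks of independent random variables are themselves independent yields the independence of $\mathscr{M}^2(t_0,t_1],\dots,\mathscr{M}^2(t_{n-1},t_n]$.

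For \textbf{stationary increments}, I would fix $s,t\geq 0$ and use the stationary-increments property of each $M_\ell$ to get $M_\ell(s,s+t]\stackrel{d}{=}M_\ell(t)$ for $\ell=1,2$. Because $M_1$ and $M_2$ are independent (so the joint law is a product of marginals), this upgrades to the joint distributional equality
\begin{equation*}
\bigl(M_1(s,s+t],\,M_2(s,s+t]\bigr)\stackrel{d}{=}\bigl(M_1(t),\,M_2(t)\bigr),
\end{equation*}
and taking sums gives $\mathscr{M}^2(s,s+t]\stackrel{d}{=}\mathscr{M}^2(t)$, which is the desired stationarity. A cleaner alternative, if preferred, would be to compute the joint characteristic function (or pgf) of the increments using \eqref{pgfgcp} for each component and observe that it factors as a product depending only on the lengths $t_i-t_{i-1}$, which simultaneously encodes independence and stationarity.

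There is essentially no genuine obstacle here: the statement is a formal consequence of pairing the two elementary facts (independence of the components and independent/stationary increments of each), and the only point that needs a small care is the step of passing from pairwise/marginal independence to the joint statement for the sums. I would make that step explicit rather than treat it as obvious, since it is the one place where the hypothesis that $M_1$ and $M_2$ are independent enters in both directions (through independence \emph{and} through stationary increments).
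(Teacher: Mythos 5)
Your proposal is correct and follows essentially the same route as the paper: both arguments reduce the claim to the independence of $\{M_1(t)\}_{t\geq0}$ and $\{M_2(t)\}_{t\geq0}$ together with the independent and stationary increments of each component, the only difference being that you invoke the disjoint-blocks fact abstractly where the paper writes out the convolution sums explicitly. No gaps.
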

\begin{proof}

	\normalsize	
	Let $0\leq t_0<t_1<t_2<\dots<t_m<\infty$. Then, \scriptsize
	{\begin{align}
			\mathrm{Pr}&\{\mathscr{M}^2(t_0,t_1]=n_1,\mathscr{M}^2(t_1,t_2]=n_2,\dots,\mathscr{M}^2(t_{m-1},t_m]=n_m \}\nonumber\\
			&=\sum_{l_1,l_2,\dots,l_m}\mathrm{Pr}\big\{M_1(t_0,t_1]=l_1,M_2(t_0,t_1]=n_1-l_1,\dots,M_1(t_{m-1},t_m]=l_m,M_2(t_{m-1},t_m]=n_m-l_m\big\}\nonumber\\
			&=\sum_{l_1,l_2,\dots,l_m}\mathrm{Pr}\left(\cap_{i=1}^{m}\left\{M_1(t_{i-1},t_i]=l_i\right\}\right)\mathrm{Pr}\left(\cap_{i=1}^{m}\left\{M_2(t_{i-1},t_i]=n_i-l_i\right\}\right)\, \label{indof12}\\
			&=\sum_{l_1,l_2,\dots,l_m}\prod_{i=1}^{m}\mathrm{Pr}\left\{M_1(t_{i-1},t_i]=l_i\right\}\mathrm{Pr}\left\{M_2(t_{i-1},t_i]=n_i-l_i\right\}\label{incre12}\\
			&=\prod_{i=1}^{m}\sum_{l_1,l_2,\dots,l_m}\mathrm{Pr}\big\{M_1(t_{i-1},t_i]=l_i\big\}\mathrm{Pr}\left\{M_2(t_{i-1},t_i]=n_i-l_i\right\}\nonumber\\
			&=\prod_{i=1}^{m}\mathrm{Pr}\left\{\mathscr{M}^2(t_{i-1},t_i]=n_i\right\},\nonumber
	\end{align}}
	\normalsize	where we have used the independence of $\{M_1(t)\}_{t\geq0}$ and $\{M_2(t)\}_{t\geq0}$ and their independent increments to obtain \eqref{indof12} and \eqref{incre12}, respectively. Thus, the merged process $\{\mathscr{M}^2(t)\}_{t\geq0}$ has independent increments. Next, we show that it has stationary increments. For $s>0$, we have
	\begin{align*}
		\mathrm{Pr}\{\mathscr{M}^2(t,t+s]=m\}&=\sum_{l=1}^{m}\mathrm{Pr}\{M_1(t,t+s]=l\}\mathrm{Pr}\{M_2(t,t+s]=m-l\}\\
		&=\sum_{l=1}^{m}\mathrm{Pr}\{M_1(s)=l\}\mathrm{Pr}\{M_2(s)=m-l\}\\
		&=	\mathrm{Pr}\{\mathscr{M}^2(s)=m\}.
	\end{align*}
	The above steps follows from the independence of $\{M_1(t)\}_{t\geq0}$ and $\{M_2(t)\}_{t\geq0}$ and their stationary increments.
	This completes the proof.
\end{proof}
If $\{M_1(t)\}_{t\geq0}$ performs no jump in $(y,y+h]$, then 
\begin{equation*}
	\mathscr{M}^2(y,y+h]=\mathscr{M}^2(y+h)-\mathscr{M}^2(y)=M_2(y+h)-M_2(y).
\end{equation*} 
Similarly, 
\begin{equation*}
	\mathscr{M}^2(y,y+h]=\mathscr{M}^2(y+h)-\mathscr{M}^2(y)=M_1(y+h)-M_1(y),
\end{equation*} 
provided $\{M_2(t)\}_{t\geq0}$ performs no jump in $(y,y+h]$. As $\{\mathscr{M}^2(t)\}_{t\geq0}$ has stationary increments, we have
\begin{align*}
	\mathscr{M}^2(t,t+h]&\overset{d}{=}\mathscr{M}^2(h),
\end{align*} 
where $\overset{d}{=}$ denotes the equality in distribution.

First, we consider the case when $k_1<k_2$. The jump probabilities of $\{\mathscr{M}^2(t)\}_{t\geq0}$ in an infinitesimal interval of length $h$ are obtained as follows: For $j=0$, we have  
\begin{align*}
	\mathrm{Pr}\{\mathscr{M}^2(h)=0\}&=		\mathrm{Pr}\{M_1(h)=0\}\mathrm{Pr}\{M_2(h)=0\}+o(h)\\
	&=1-(\lambda +\mu)h+o(h),
\end{align*}
where $\lambda=\lambda_1+\lambda_2+\dots+\lambda_{k_1}$ and $\mu=\mu_1+\mu_2+\dots+\mu_{k_2}$. 

For $1\leq j\leq k_1$, we have
\begin{align*}
	\mathrm{Pr}\{\mathscr{M}^2(h)=j\}&=		\mathrm{Pr}\{M_1(h)=j\}\mathrm{Pr}\{M_2(h)=0\}+\mathrm{Pr}\{M_1(h)=0\}\mathrm{Pr}\{M_2(h)=j\}+o(h)\\
	&=(\lambda_j+\mu_j)h+o(h),
\end{align*}
for $ k_1<j\leq k_2$, we have \begin{equation*}
	\mathrm{Pr}\{\mathscr{M}^2(h)=j\}=\mathrm{Pr}\{M_2(h)=j\} =\mu_jh+o(h)
\end{equation*}
and for $j>k_2$, we have 
\begin{equation*}
	\mathrm{Pr}\{\mathscr{M}^2(h)=j\}=o(h).
\end{equation*}
So, for any $i\geq 0$, we get
\begin{equation}\label{stprob}
	\mathrm{Pr}\{\mathscr{M}^2(t+h)=i+j\big|\mathscr{M}^2(t)=i\}	=\begin{cases}
		1-(\lambda +\mu)h+o(h),\ j=0,\\
		(\lambda_j+\mu_j)h+o(h),\ 1\leq j\leq k_1,\\
		\mu_jh+o(h),\ k_1<j\leq k_2,\\
		o(h), \ \text{otherwise},
	\end{cases}
\end{equation}
where $o(h)/h\to 0$ as $ h\to 0$.

Hence, the merged process $\{\mathscr{M}^2(t)\}_{t\geq0}$  is a GCP which performs $k_2$ kinds of jumps of amplitude $1, 2, \ldots, k_2$ with positive rates $\lambda_1+\mu_1$, $\lambda_2+\mu_2$, $\dots$, $\lambda_{k_1}+\mu_{k_1}$, $\mu_{k_1+1}$, $\ldots$, $\mu_{k_2}$, respectively.
%

\begin{remark}
	Similarly, for $k_2<k_1$, the merged process $\{\mathscr{M}^2(t)\}_{t\geq0}$ is a GCP which  performs $k_1$ kinds of jumps of amplitude $1, 2, \dots, k_1$ with positive rates $\lambda_1+\mu_1$, $\lambda_2+\mu_2$, $\dots$, $\lambda_{k_2}+\mu_{k_2}$, $\lambda_{k_2+1}$, $\dots$, $\lambda_{k_1}$, respectively. 
\end{remark}
\begin{remark}
	For $k_1=k_2=k $ (say), the merged process  $\{\mathscr{M}^2(t)\}_{t\geq0}$ is a GCP with increased rates $\beta_j=\lambda_j+\mu_j,\,  1\leq j\leq k$. 
\end{remark}
It is important to note that, for $k_1=k_2=1$, the above result reduces to the merging of two independent Poisson processes (see Bertsekas and Tsitsiklis $(2008)$, p. $24$).

%

\subsection{Merging of finitely many independent GCPs}\label{section 1.2}
Let us consider $q$ independent GCPs  $\{M_1(t)\}_{t\geq0},\{M_2(t)\}_{t\geq0},\dots, \{M_q(t)\}_{t\geq0}$, such that for each $1\leq i\leq q$ the GCP $\{M_i(t)\}_{t\geq0}$ performs $k_i$ kinds of jumps of amplitude $1,2,\dots,k_i$  with positive rates $\lambda^{(i)}_1$, $\lambda^{(i)}_2$, $\dots$, $\lambda^{(i)}_{k_i}$, respectively. The merging of these GCPs results in a merged process $\{\mathscr{M}^q(t)\}_{t\geq0}$ defined as follows:
\begin{equation*}
	\mathscr{M}^q(t)\coloneqq M_1(t)+M_2(t)+\dots+M_q(t).
\end{equation*}  

In an infinitesimal interval of length $h$, a packet of jumps is registered in the merged process $\{\mathscr{M}^q(t)\}_{t\geq0}$ with some positive probability if and only if the jumps are registered in exactly one of the merging components. Moreover, the probability of simultaneous occurrence of jumps in more than one GCPs is of order $o(h)$. The above observations is similar to the case of merging of two independent GCPs.
The merging of $q$ independent GCPs is depicted in Figure $\ref{mergenn}$.
Without loss of generality, we can take $k_1\leq k_2\leq\dots\leq k_q$. Then, as in the case of merging of two independent GCPs, the merged process $\{\mathscr{M}^q(t)\}_{t\geq0}$ has independent and stationary increments, and it performs $k_q$ kinds of jumps of amplitude $1,2,\dots,k_{q}$. Further, 
\begin{equation*}
	\mathscr{M}^q(t,t+h]=\mathscr{M}^q(t+h)-\mathscr{M}^q(t)=M_i(t+h)-M_i(t),
\end{equation*} 
provided $M_j(t+h)-M_j(t)=0$, for all $i\neq j$ and $1\leq i,j\leq q$.

Let
\begin{align*}
	k_{(1)}&=\min\{k_1,k_2,\dots,k_q\},	\\
	k_{(2)}&=\min\{k_1,k_2,\dots,k_q\}\backslash\{k_{(1)}\},\\
	&\hspace*{.2cm}\vdots\\
	k_{(l-1)}&=\max\{k_1,k_2,\dots,k_q\}\backslash\{k_{(l)}\},\\
	k_{(l)}&=\max\{k_1,k_2,\dots,k_q\}.
\end{align*}
So, among these $q$ independent GCPs there are a total of $d_i$ number of GCPs that perform $k_{(i)}$ kinds of jumps such that $d_1+d_2+\dots+d_l=q$. As $\{\mathscr{M}^q(t)\}_{t\geq0}$ has stationary increments, we have
\begin{equation*}
	\mathscr{M}^q(t,t+h]\overset{d}{=}\mathscr{M}^q(h).
\end{equation*} 
So,
\begin{equation}\label{jumppmf}
	\mathrm{Pr}\{\mathscr{M}^q(t,t+h]=i+j|\mathscr{M}^q(t)=i\}=\begin{cases}
		1-\beta h+o(h), \, j=0,\\
		\beta_j h+o(h),\ 1\leq j\leq k_{(l)},\\
		o(h),\, \text{otherwise},
	\end{cases}
\end{equation}
where $o(h)/h\to 0$ as $h\to 0$ and
\begin{equation}\label{betaqjs}
	\beta_j=\begin{cases}
		\sum_{i=1}^{q}\lambda^{(i)}_j, \ 1\leq j\leq k_{(1)}, 
		\vspace*{.1 cm}\\
		\sum_{i=r_1}^{q}\lambda^{(i)}_j,\ k_{(1)} +1\leq j\leq k_{(2)},
		\vspace*{.1 cm}\\
		\hspace*{.8cm}\vdots\\
		\sum_{i=r_{l-2}}^{q}\lambda^{(i)}_j, \ k_{(l-2)}+1\leq j\leq k_{(l-1)},\vspace*{.1cm}\\
		\sum_{i=r_{l-1}}^{q}\lambda^{(i)}_j, \ k_{(l-1)}+1\leq j\leq k_{(l)}.
	\end{cases}
\end{equation}
Here, $\beta = \beta_1+\beta_2+\dots+\beta_{k_{(l)}}$ and
$r_m=\sum_{i=1}^{m}d_i+1$.
Thus, the merged process $\{\mathscr{M}^q(t)\}_{t\geq0}$ performs jumps of amplitude $1,2,\dots, k_{(l)}$ with positive rates $\beta_1,\beta_2,\dots,\beta_{k_{(l)}}$, respectively.
\begin{figure}[htp]
	\includegraphics[width=14cm]{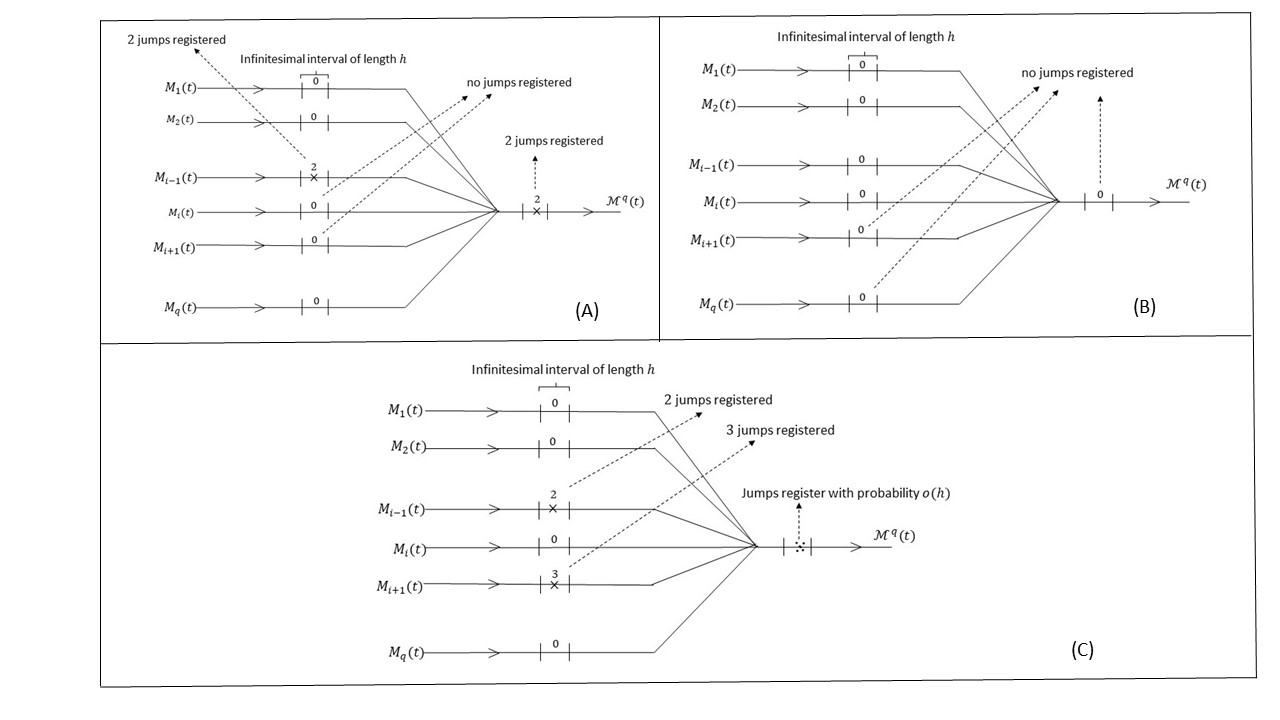}
	\caption{Merging of $q$ independent GCPs.}\label{mergenn}
\end{figure}
\begin{remark}
	If $k_1<k_2<\dots<k_q$ then $k_{(i)}=k_i$ and $l=q$. In this case, the merged process $\{\mathscr{M}^q(t)\}_{t\geq0}$ is a GCP that performs $1,2,\dots,k_q$ kinds of jumps with positive rates $\beta_1=\sum_{i=1}^{q}\lambda_1^{(i)}$, $\beta_2=\sum_{i=2}^{q}\lambda_2^{(i)}$, $\dots$, $\beta_{k_q}=\lambda_{k_q}^{(q)}$, respectively.
\end{remark}
\begin{remark}
	If $k_1=k_2=\dots=k_q=k $ (say) then $l=1$. In this case too, the merged process $\{\mathscr{M}^q(t)\}_{t\geq0}$ is a GCP. 
	Alternatively, this can be established from its pgf which can be obtained as follows:
	\begin{align*}
		\mathbb{E}\left(u^{\mathscr{M}^q(t)}\right)&=\prod_{i=1}^{q}\exp\left(-\sum_{j=1}^{k}\lambda_j^{(i)}t(1-u^j)\right),\, \text{(as $\{M_i(t)\}_{t\geq0}$ are independent GCPs)}\\
		&=\exp\left(-\sum_{j=1}^{k}\sum_{i=1}^{q}\lambda_j^{(i)}t(1-u^j)\right).
	\end{align*}
	This shows that the merged process $\{\mathscr{M}^q(t)\}_{t\geq0}$ is a GCP that performs $1,2,\dots,k$ kinds of jumps with positive rates $\sum_{i=1}^{q}\lambda_1^{(i)},\sum_{i=1}^{q}\lambda_2^{(i)},\dots,\sum_{i=1}^{q}\lambda_k^{(i)}$, respectively.
\end{remark}

\subsection{Merging of countably many independent GCPs}\label{Section 1.3}
Let us consider a collection of countably infinite independent GCPs $\{M_1(t)\}_{t\geq0}$, $\{M_2(t)\}_{t\geq0}$,\ldots, such that for each  $i\geq 1$ the GCP $\{M_i(t)\}_{t\geq0}$  performs jumps of amplitude $1,2,\ldots,k_i$ with positive rates $\lambda^{(i)}_1,\lambda^{(i)}_2,\dots,\lambda^{(i)}_{k_i}$, respectively. Assume that $\lambda_j^{(i)}=0$ for all $j>k_i$. Without loss of generality, we consider $k_1\leq k_2\leq k_3\dots$.\\ 
Let
\begin{align*}
	k_{(1)}&=\min\{k_1,k_2,\ldots\},\\
	k_{(2)}&=\min\{k_1,k_2,\dots\}\backslash \{k_{(1)}\},\\
	&\hspace{.2cm}\vdots\\
	k_{(q-1)}&=\max\{k_1,k_2,\dots\}\backslash \{k_{(q)}\},\\
	k_{(q)}&=\max\{k_1,k_2,\dots\},
\end{align*} 
and assume $k_{(q)}<\infty$. 
\begin{theorem}
	For all $ 1\leq j\leq k_{(q)}$, if
	\begin{equation}\label{countrate}
		\beta_j=\sum_{i=1}^{\infty}\lambda_j^{(i)}<\infty,
	\end{equation}
	then for any fix $t\geq0$, the merged process   $\mathscr{M}^\infty(t):=\sum_{i=1}^{\infty}M_i(t)$
	converges with probability $1$. Moreover, $\{\mathscr{M}^\infty(t)\}_{t\geq0}$ is a GCP which performs $k_{(q)}$ kinds of jumps with amplitude $1,2,\dots,k_{(q)}$ and corresponding rates $\beta_1$, $\beta_2$, $\dots$, $\beta_{k_{(q)}}$. If for any $1\leq j\leq k_{(q)}$, $\beta_j$ diverges then $\{\mathscr{M}^\infty(t)\}_{t
		\geq0}$ diverges with probability $1$. 
\end{theorem}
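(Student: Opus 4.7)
The plan is to split the proof into three parts: almost-sure convergence under the summability hypothesis, identification of the limit process as a GCP, and almost-sure divergence when some $\beta_j$ is infinite.

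First, for the convergence claim, I would use monotonicity. The partial sums $S_n(t) := \sum_{i=1}^{n} M_i(t)$ are non-decreasing in $n$, so they converge pointwise to a limit in $[0,\infty]$. By the mean formula (1.3) and the monotone convergence theorem,
\begin{equation*}
\mathbb{E}(\mathscr{M}^{\infty}(t)) \;=\; \lim_{n \to \infty} \mathbb{E}(S_n(t)) \;=\; t \sum_{j=1}^{k_{(q)}} j \beta_j \;<\; \infty,
\end{equation*}
and a finite expectation forces $\mathscr{M}^\infty(t) < \infty$ almost surely.

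Second, to identify the distribution of the limit, I would exploit the pgf (1.2) together with independence of the $M_i$'s to obtain
\begin{equation*}
\mathbb{E}(u^{S_n(t)}) \;=\; \exp\bigg(-\sum_{j=1}^{k_{(q)}} (1 - u^j) t \sum_{i=1}^{n} \lambda_j^{(i)}\bigg), \qquad |u| \leq 1,
\end{equation*}
which as $n \to \infty$ tends to $\exp\bigl(-\sum_{j=1}^{k_{(q)}} \beta_j (1-u^j) t\bigr)$, precisely the pgf of a GCP with rates $\beta_1, \dots, \beta_{k_{(q)}}$. To promote this marginal identification to the process-level statement that $\{\mathscr{M}^\infty(t)\}_{t \geq 0}$ is a GCP, I would pass the independent and stationary increments established for each finite partial sum $S_n$ (via the $q$-fold extension of Proposition 2.1 from Subsection 2.1) through the almost-sure pointwise limit, then read off the infinitesimal jump probabilities (2.4) by expanding the limiting pgf near $h = 0$.

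For the divergence half, if some $\beta_{j_0} = \infty$, I would fix any $u \in (0,1)$. Since $1 - u^{j_0} > 0$, the exponent in the pgf of $S_n(t)$ tends to $-\infty$, so $\mathbb{E}(u^{S_n(t)}) \to 0$. Combining this with the elementary bound $\mathbb{E}(u^{S_n(t)}) \geq u^K \mathrm{Pr}\{S_n(t) \leq K\}$ forces $\mathrm{Pr}\{S_n(t) \leq K\} \to 0$ for every $K$, and monotonicity of $S_n(t)$ in $n$ promotes this to $\mathscr{M}^\infty(t) = +\infty$ almost surely. The main obstacle I anticipate is not the marginal identification but the passage from partial-sum increments to the process-level jump structure: I must ensure that the $o(h)$ terms continue to behave correctly when infinitely many components could in principle contribute simultaneous jumps. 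I would control the probability of two or more components jumping in a window of length $h$ by a double series dominated by $\tfrac{1}{2}(\beta h)^2$ with $\beta := \sum_{j=1}^{k_{(q)}} \beta_j$, which is indeed $o(h)$ under the summability assumption (2.6).
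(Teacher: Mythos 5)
Your argument is correct, but it follows a genuinely different route from the paper's. The paper works directly with the explicit state probabilities \eqref{gcpnt}: it writes $\mathrm{Pr}\{\mathscr{M}^\infty(t)\le r\}=\lim_{n\to\infty}\mathrm{Pr}\{\mathscr{M}^n(t)\le r\}$, pushes the limit inside the finite-support sum over $\Omega(k_n,m)$ to obtain the GCP pmf with rates $\beta_j$ in the convergent case, and in the divergent case factors out $e^{-(\beta_1^n+\dots+\beta_{k_n}^n)t}\to 0$ to conclude $\mathrm{Pr}\{\mathscr{M}^\infty(t)\le r\}=0$ for every $r$. You instead run everything through the pgf \eqref{pgfgcp}: monotone convergence of the partial sums plus the mean formula \eqref{meanvar} gives a.s.\ finiteness; dominated convergence of $u^{S_n(t)}$ identifies the limiting pgf as that of a GCP with rates $\beta_j$; and the bound $\mathbb{E}(u^{S_n(t)})\ge u^K\,\mathrm{Pr}\{S_n(t)\le K\}$ handles divergence. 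Your route buys two things: the a.s.\ convergence is obtained cleanly from $\mathbb{E}(\mathscr{M}^\infty(t))<\infty$ rather than being read off implicitly from the distributional limit, and you avoid the interchange of $\lim_{n}$ with the sum over the $n$-dependent index set $\Omega(k_n,m)$, which the paper performs somewhat informally. The paper's route buys the explicit closed-form pmf of $\mathscr{M}^\infty(t)$ as a by-product rather than only the pgf (from which the pmf must then be re-extracted). Your closing remarks on the process-level structure --- passing independent and stationary increments through the a.s.\ limit and bounding the probability of simultaneous jumps in a window of length $h$ by $\tfrac12(\beta h)^2=o(h)$ --- address a point the paper only asserts after the proof, and the bound is valid since $\sum_{i}\mathrm{Pr}\{M_i(h)\ge 1\}\le \sum_i\big(\sum_j\lambda_j^{(i)}\big)h=\beta h$ under \eqref{countrate}. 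The one place to be slightly careful is the divergence step at $t=0$, where the exponent does not blow up; but there $\mathscr{M}^\infty(0)=0$ trivially, so the claim as stated should be read for $t>0$ in both your argument and the paper's.
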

\begin{proof}
	Recall from Section $\ref{section 1.2}$ that  $\mathscr{M}^n(t)=\sum_{i=1}^{n}M_i(t)$ 
	is a GCP which performs $1,2,\dots,k_n$ kinds of jumps with positive rates $\beta_1^n=\sum_{i=1}^{n}\lambda_1^{(i)}$,  $\beta_2^n=\sum_{i=1}^{n}\lambda_2^{(i)}$, $\dots$,  $\beta_{k_n}^n=\sum_{i=1}^{n}\lambda_{k_n}^{(i)}$, respectively.
	For any fix $r\geq0$, we have
	\begin{align}
		\mathrm{Pr}\{\mathscr{M}^\infty(t)\leq r\}&=\lim_{n\to\infty}\mathrm{Pr}\{\mathscr{M}^n(t)\leq r\}\nonumber\\
		&=\lim_{n\to\infty}\sum_{m=0}^{r}\sum_{\Omega(k_n,m)}\prod_{j=1}^{k_n}\frac{(\beta_j^nt)^{x_j}}{x_j!}e^{-\beta_j^nt},\, \text{(using \eqref{gcpnt})} \label{betajconven}\\
		&=\sum_{m=0}^{r}\sum_{\Omega(k_{(q)},m)}\prod_{j=1}^{k_{(q)}}\lim_{n\to\infty}\frac{(\beta_j^nt)^{x_j}}{x_j!}e^{-\beta_j^nt},\, \text{($k_n\to k_{(q)}$ as $n\to \infty$) }.\nonumber
	\end{align}

	If $\beta_j^n$ converges to $\beta_j$ for all $j$ then
	\begin{equation*}
		\mathrm{Pr}\{\mathscr{M}^\infty(t)\leq r\}=\sum_{m=0}^{r}\sum_{\Omega\left(k_{(q)},m\right)}\prod_{j=1}^{k_{(q)}}\frac{(\beta_jt)^{x_j}}{x_j!}e^{-\beta_j t}.
	\end{equation*}
	So,
	\begin{equation*}
		\mathrm{Pr}\{\mathscr{M}^\infty(t)= r\}=\mathrm{Pr}\{\mathscr{M}^\infty(t)\leq r\}-\mathrm{Pr}\{\mathscr{M}^\infty(t)\leq r-1\}=\sum_{\Omega\left(k_{(q)},r\right)}\prod_{j=1}^{k_{(q)}}\frac{(\beta_jt)^{x_j}}{x_j!}e^{-\beta_j t}.
	\end{equation*}
	Thus, $\{\mathscr{M}^\infty(t)\}_{t\geq0}$ is a GCP which performs jumps of size $1,2,\dots,k_{(q)}$ with corresponding rates $\beta_1$, $\beta_2$, $\dots$, $\beta_{k_{(q)}}$. 
	
	If for any $1\leq j\leq k_{(q)}$, $\beta_j^n$ diverges then from \eqref{betajconven}, we have 
	\begin{equation*}
		\mathrm{Pr}\{\mathscr{M}^\infty(t)\leq r\}=\lim_{n\to\infty}e^{-\left(\beta_1^n+\beta_2^n+\dots+\beta_{k_n}^n\right)t}\sum_{m=0}^{r}\sum_{\Omega(k_n,m)}\prod_{j=1}^{k_n}\frac{(\beta_j^nt)^{x_j}}{x_j!}\to 0,
	\end{equation*}
	which implies $\mathrm{Pr}\{\mathscr{M}^\infty(t)> r\}=1$ and it holds for all $r\geq0$. Therefore, $\{\mathscr{M}^\infty(t)\}_{t\geq0}$ diverges with probability $1$. This completes the proof.
\end{proof}
In an infinitesimal interval of length $h$, a packet of jumps registers in the merged process $\{\mathscr{M}^\infty(t)\}_{t\geq 0}$ if and only if this packet of jumps is observed in exactly one of the merging components. The probability of simultaneous jumps in more than one merging components is of order $o(h)$. Similar to the case of merging of finitely many independent GCPs, here also the merging components are independent, and have independent and stationary increments. So, the merged process $\{\mathscr{M}^\infty(t)\}_{t\geq0}$ has independent and stationary increments. For any $i\geq0$, its jump probabilities are given by
\begin{equation}\label{stcde}
	\mathrm{Pr}\{\mathscr{M}^\infty(t+h)=i+j|\mathscr{M}^\infty(t)=i\}=\begin{cases}
		1-\beta h+o(h), \, j=0,\\
		\beta_jh+o(h), \, 1\le j\le k_{(q)},\\
		o(h), \, \text{otherwise},
	\end{cases}
\end{equation} 
where $\beta=\beta_1+\beta_2+\dots+\beta_{k_{(q)}}$ and  $\beta_j$'s are given by \eqref{countrate}.

\begin{remark}
	For $k_1=k_2=k_3=\dots=k$, the pgf of the merged process $\{\mathscr{M}^\infty(t)\}_{t\geq0}$ is obtained as follows:
	\begin{align*}
		\mathbb{E}\left(u^{\mathscr{M}^\infty(t)}\right)
		&=\mathbb{E}\left(\lim_{n\to\infty}\prod_{i=1}^{n}u^{M_i(t)}\right)\\
		&=\lim_{n\to\infty}\prod_{i=1}^{n}\mathbb{E}\left(u^{M_i(t)}\right),\, \text{(as $\{M_i(t)\}_{t\geq0}$ are independent)}\\
		&=\exp\left(-\sum_{j=1}^{k}\sum_{i=1}^{\infty}\lambda_j^{(i)}t(1-u^j)\right),
	\end{align*}
	where the last step follows on using \eqref{pgfgcp}.
	Thus, the merged  $\{\mathscr{M}^\infty(t)\}_{t\geq0}$ is a GCP provided $\sum_{i=1}^{\infty}\lambda_j^{(i)}<\infty$ for all $1\leq j\leq k$. It performs $k$ kinds of jumps of amplitude $1,2,\dots,k$ with positive rates $\sum_{i=1}^{\infty}\lambda_1^{(i)},\sum_{i=1}^{\infty}\lambda_2^{(i)},\dots$, $\sum_{i=1}^{\infty}\lambda_k^{(i)}$, respectively.
	
	For $k=1$, this reduces to the merging of countable many independent Poisson processes (see Kingman (1993), p. 5).
\end{remark}

\section{Jump probabilities from a particular process}
Let $\{M_1(t)\}_{t\geq0}$ and $\{M_2(t)\}_{t\geq0}$ be two independent GCPs that perform $k_1$ and $k_2$ kinds of jumps, respectively. In Section \ref{Sec1}, it is shown that the merged process $\{\mathscr{M}^2(t)\}_{t\geq0}$ is a GCP that performs $k=\max\{k_1,k_2\}$ kinds of jumps. For $k_1<k_2$, its jump probabilities are given by \eqref{stprob}.

\begin{proposition}\label{pr1thm}
	Let $k_1<k_2$.
	If a packet of $j$ size of jumps is registered in the merged process $\{\mathscr{M}^2(t)\}_{t\geq0}$ then the probability that it originates from $\{M_1(t)\}_{t\geq0}$ is given by
	\begin{equation*}
		\mathrm{Pr}\{M_1(h)=j|\mathscr{M}^2(h)=j\}=
		\begin{cases}
			\frac{\lambda_j}{\lambda_j+\mu_j}, \ 1\leq j \leq k_1,\\
			0, \ k_1 +1\leq j\leq k_2,
		\end{cases}
	\end{equation*}
	and the probability that it originates from $\{M_2(t)\}_{t\geq0}$ is given by
	\begin{equation*}
		\mathrm{Pr}\{M_2(h)=j | \mathscr{M}^2(h)=j\}=
		\begin{cases}
			\frac{\mu_j}{\lambda_j+\mu_j}, \ 1\leq j \leq k_1,\\
			1, \ k_1 +1\leq j\leq k_2,
		\end{cases}
	\end{equation*}
	provided $o(h)/h\to 0$ as $h\to 0$.
\end{proposition}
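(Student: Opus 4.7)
The plan is to apply the definition of conditional probability together with the independence of $\{M_1(t)\}_{t\geq 0}$ and $\{M_2(t)\}_{t\geq 0}$, and then to pass to the limit as $h\to 0$ so that the $o(h)$ terms drop out. Concretely, write
\begin{equation*}
\mathrm{Pr}\{M_1(h)=j\,|\,\mathscr{M}^2(h)=j\}=\frac{\mathrm{Pr}\{M_1(h)=j,\,\mathscr{M}^2(h)=j\}}{\mathrm{Pr}\{\mathscr{M}^2(h)=j\}},
\end{equation*}
and observe that the event in the numerator coincides with $\{M_1(h)=j,\,M_2(h)=0\}$, which by independence factors as $\mathrm{Pr}\{M_1(h)=j\}\cdot\mathrm{Pr}\{M_2(h)=0\}$. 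The denominator is already computed in \eqref{stprob}.

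First I would handle the range $1\leq j\leq k_1$. Here $\mathrm{Pr}\{M_1(h)=j\}=\lambda_j h+o(h)$ and $\mathrm{Pr}\{M_2(h)=0\}=1-\mu h+o(h)$, so the numerator becomes $\lambda_j h+o(h)$. Dividing by $(\lambda_j+\mu_j)h+o(h)$ from \eqref{stprob} and letting $h\to 0$ gives the claimed ratio $\lambda_j/(\lambda_j+\mu_j)$. The case $k_1+1\leq j\leq k_2$ is even simpler: since $\{M_1(t)\}_{t\geq 0}$ only performs jumps up to size $k_1$, we have $\mathrm{Pr}\{M_1(h)=j\}=o(h)$, which makes the numerator $o(h)$ while the denominator is $\mu_j h+o(h)$, and the ratio tends to $0$.

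The computation for the $M_2$-side is entirely symmetric. For $1\leq j\leq k_1$, the numerator $\mathrm{Pr}\{M_1(h)=0\}\cdot\mathrm{Pr}\{M_2(h)=j\}=(1-\lambda h+o(h))(\mu_j h+o(h))=\mu_j h+o(h)$ divided by $(\lambda_j+\mu_j)h+o(h)$ gives $\mu_j/(\lambda_j+\mu_j)$ in the limit. For $k_1+1\leq j\leq k_2$, the event $\{\mathscr{M}^2(h)=j\}$ forces $M_1(h)=0$ up to an $o(h)$ contribution (since a joint jump is of order $o(h)$ as in Figure \ref{fig1}), so $\mathrm{Pr}\{M_2(h)=j\,|\,\mathscr{M}^2(h)=j\}\to 1$.

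The only mild subtlety, and essentially the one place to be careful, is bookkeeping of the $o(h)$ contributions: the events $\{M_1(h)=j,M_2(h)=0\}$ and $\{M_1(h)=0,M_2(h)=j\}$ do not quite exhaust $\{\mathscr{M}^2(h)=j\}$ since simultaneous jumps summing to $j$ are possible, but by the infinitesimal jump structure these contribute only $o(h)$ and are absorbed into the error terms when we divide and take $h\to 0$. I do not anticipate any serious obstacle beyond being explicit about this point.
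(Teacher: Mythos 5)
Your proposal is correct and follows essentially the same route as the paper: both apply the definition of conditional probability, identify the numerator event with $\{M_1(h)=j,\,M_2(h)=0\}$ (respectively $\{M_1(h)=0,\,M_2(h)=j\}$), factor by independence, and divide by the jump probability of $\{\mathscr{M}^2(t)\}_{t\geq 0}$ from \eqref{stprob}. Your version is simply more explicit about the $o(h)$ bookkeeping and the passage to the limit $h\to 0$, which the paper leaves implicit.
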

\begin{proof}
	Let us consider an infinitesimal interval of length $h$, such that $o(h)/h\to0$ as $h\to 0$. Then,
	\begin{equation*}
		\mathrm{Pr}\{M_1(h)=j|\mathscr{M}^2(h)=j\}=
		\frac{\mathrm{Pr}\{M_1(h)=j,\mathscr{M}^2(h)=j\}}{\mathrm{Pr}\{\mathscr{M}^2(h)=j\}}\\
		=\begin{cases}
			\frac{\lambda_j}{\lambda_j+\mu_j}, \ 1\leq j \leq k_1,\,\vspace*{.1cm}\\
			0, \ k_1 +1\leq j\leq k_2,
		\end{cases}
	\end{equation*}
	which follows because the GCP $\{M_1(t)\}_{t\geq0}$ performs a maximum of $k_1$ kinds of jumps. If more than $k_1$ kinds of jumps are registered in  $\{\mathscr{M}^2(t)\}_{t\geq0}$ then it is certain that those jumps are from  $\{M_2(t)\}_{t\geq0}$.  Similarly,	 
	\begin{equation*}
		\mathrm{Pr}\{M_2(h)=j|\mathscr{M}^2(h)=j\}
		=\frac{\mathrm{Pr}\{M_2(h)=j, \mathscr{M}^2(h)=j\}}{\mathrm{Pr}\{\mathscr{M}^2(h)=j\}}\\
		=\begin{cases}
			\frac{\mu_j}{\lambda_j+\mu_j}, \ 1\leq j \leq k_1,\vspace*{.1cm}\\
			1, \ k_1 +1\leq j\leq k_2.
		\end{cases}
	\end{equation*}
	This completes the proof.
\end{proof}
\begin{remark}
	For $k_2<k_1$,  
	we have
	\begin{equation*}
		\mathrm{Pr}\{M_1(h)=j|\mathscr{M}^2(h)=j\}=
		\begin{cases}
			\frac{\lambda_j}{\lambda_j+\mu_j}, \ 1\leq j \leq k_2,\\
			1, \ k_2 +1\leq j\leq k_1,
		\end{cases}
	\end{equation*}
	and 
	\begin{equation*}
		\mathrm{Pr}\{M_2(h)=j|\mathscr{M}^2(h)=j\}=
		\begin{cases}
			\frac{\mu_j}{\lambda_j+\mu_j}, \ 1\leq j \leq k_2,\\
			0, \ k_2 +1\leq j\leq k_1,
		\end{cases}
	\end{equation*}
	provided $o(h)/h\to 0$ as $h\to 0$.
\end{remark}
\begin{remark}
	For $k_1=k_2=k$ (say), the jump probabilities of $\{\mathscr{M}^2(t)\}_{t\geq0}$ are given by
	\begin{equation*}
		\mathrm{Pr}\{\mathscr{M}^2(t+h)=i+j|\mathscr{M}^2(t)=i\}=
		\begin{cases}
			1-(\lambda+\mu)h+o(h),\ j=0,\\
			(\lambda_j+\mu_j)h+o(h), \ 1\leq j\leq k,\\
			o(h), \ \mathrm{otherwise},
		\end{cases}
	\end{equation*} 
	where $\lambda+\mu=\sum_{j=1}^{k}(\lambda_j+\mu_j)$. In this case, we have
	$\mathrm{Pr}\{M_1(h)=j|\mathscr{M}^2(h)=j\}=	{\lambda_j}/{(\lambda_j+\mu_j)}$
	and 
	$\mathrm{Pr}\{M_2(h)=j | \mathscr{M}^2(h)=j\}=	{\mu_j}/{(\lambda_j+\mu_j)}$ for any $ j \in\{1,2,\dots,k\}$.
\end{remark}

\subsection{The case of merging of finitely many GCPs} Let us consider $q$ independent GCPs $\{M_1(t)\}_{t\geq0}$, $\{M_2(t)\}_{t\geq0}$, $\dots$,  $\{M_q(t)\}_{t\geq0}$, such that, $\{M_i(t)\}_{t\geq0}$ for each $i\in\{1,2,\dots,q\}$ performs $k_i$ kinds of jumps of amplitude $1,2,\dots k_i$. Without loss of generality, we consider $k_1\leq k_2\leq \dots\leq k_q$. Then, from Section \ref{section 1.2}, the merged process $\{\mathscr{M}^q(t)\}_{t\geq 0}$ is  a GCP which performs $k_{(l)}=k_q=\max\{k_1,k_2,\dots,k_q\}$ kinds of jumps. Its jump probabilities are given by $\eqref{jumppmf}$.
\begin{proposition}
	Let $k_1\leq k_2\leq \dots\leq k_q$. If a packet of $j$ size of jumps is registered in $\{\mathscr{M}^q(t)\}_{t\geq0}$ then the probability that it originates from $\{M_s(t)\}_{t\geq0}$ for $1\leq s<q$ is given by	
	\begin{equation*}
		\mathrm{Pr}\{M_s(h)=j|\mathscr{M}^q(h)=j\}=
		\begin{cases}
			\frac{\lambda^{(s)}_{j}}{\sum_{i=1}^{q}\lambda^{(i)}_j}, \ 1\leq j \leq k_{(1)},
			\vspace*{.2cm}\\
			\frac{\lambda^{(s)}_{j}}{\sum_{i=r_{p-1}}^{q}\lambda^{(i)}_j},\,k_{(p-1)}+1\leq j\leq k_{(p)}, \,p-1\leq \displaystyle \sum_{i=1}^{p-1}d_i\leq s-1,
			\vspace*{.2cm}\\
			0, \,k_{(p-1)}+1\leq j\leq k_{(p)},\, \displaystyle \sum_{i=1}^{p-1}d_i\geq s\, \text{or} \,  k_{(s)}+1\leq j\leq k_{(l)},
		\end{cases}
	\end{equation*}
	for any $p\in\{2,3,\dots,s\}$, and for $s=q$, we have 
	\begin{equation*}
		\mathrm{Pr}\{M_q(h)=j|\mathscr{M}^q(h)=j\}=
		\begin{cases}
			\frac{\lambda^{(q)}_{j}}{\sum_{i=1}^{q}\lambda^{(i)}_j}, \ 1\leq j \leq k_{(1)},
			\vspace*{.2cm}\\
			\frac{\lambda^{(q)}_{j}}{\sum_{i=r_1}^{q}\lambda^{(i)}_j}, \ k_{(1)}+1\leq j \leq k_{(2)},\\
			\ \ \ \ \ \ \vdots\\
			\frac{\lambda^{(q)}_{j}}{\sum_{i=r_{l-1}}^{q}\lambda^{(i)}_{j}}, \ k_{(l-1)} +1\leq j\leq k_{(l)}.
		\end{cases}
	\end{equation*}
	Here, $o(h)/h\to0$ as $h\to0$ and $r_m=\sum_{i=1}^{m}d_i+1$, where $d_i$ is the number of GCPs that perform $k_{(i)}$ kinds of jumps.
\end{proposition}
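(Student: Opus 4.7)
The plan is to use the standard conditional probability identity
$$\mathrm{Pr}\{M_s(h)=j\mid\mathscr{M}^q(h)=j\}=\frac{\mathrm{Pr}\{M_s(h)=j,\mathscr{M}^q(h)=j\}}{\mathrm{Pr}\{\mathscr{M}^q(h)=j\}},$$
evaluated at an infinitesimal $h$, and then to let $o(h)/h\to 0$. The denominator is already supplied by the jump probability formula \eqref{jumppmf}, so it equals $\beta_j h+o(h)$ with $\beta_j$ given by \eqref{betaqjs}. For the numerator, I would observe that the joint event $\{M_s(h)=j,\mathscr{M}^q(h)=j\}$ forces $M_i(h)=0$ for every $i\neq s$, because any jump in a second component would contribute additional amplitude and hence would require a simultaneous jump with $M_s$, an event that has probability $o(h)$ under the independence and single-jump infinitesimal laws of the GCPs. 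By the independence of $\{M_1(t)\}_{t\ge 0},\dots,\{M_q(t)\}_{t\ge 0}$,
$$\mathrm{Pr}\{M_s(h)=j,\mathscr{M}^q(h)=j\}=\mathrm{Pr}\{M_s(h)=j\}\prod_{i\neq s}\mathrm{Pr}\{M_i(h)=0\}+o(h),$$
and each factor $\mathrm{Pr}\{M_i(h)=0\}=1+O(h)$, so the whole numerator collapses to $\lambda^{(s)}_j h+o(h)$ whenever $1\le j\le k_s$ and to $o(h)$ whenever $j>k_s$.

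Next I would run the case analysis in $j$ dictated by the structure of $\beta_j$ in \eqref{betaqjs}. For $1\le j\le k_{(1)}$ every merging component can produce a jump of size $j$, so $\beta_j=\sum_{i=1}^q\lambda^{(i)}_j$ and the ratio equals $\lambda^{(s)}_j/\sum_{i=1}^q\lambda^{(i)}_j$ for every $s$. For $k_{(p-1)}+1\le j\le k_{(p)}$ with $p\ge 2$, the only components capable of supplying a jump of size $j$ are those with $k_i\ge k_{(p)}$; under the ordering $k_1\le k_2\le\dots\le k_q$ these are precisely the components indexed by $i\ge r_{p-1}$, where $r_{p-1}=\sum_{i=1}^{p-1}d_i+1$. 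Hence $\beta_j=\sum_{i=r_{p-1}}^q\lambda^{(i)}_j$, and the numerator is nonzero exactly when $s\ge r_{p-1}$, that is, when $\sum_{i=1}^{p-1}d_i\le s-1$ (the left inequality $p-1\le\sum_{i=1}^{p-1}d_i$ is automatic since $d_i\ge 1$). When this holds the ratio gives $\lambda^{(s)}_j/\sum_{i=r_{p-1}}^q\lambda^{(i)}_j$; when it fails, i.e.\ $\sum_{i=1}^{p-1}d_i\ge s$, or more directly when $j>k_{(s)}=k_s$, the numerator is $o(h)$ and the conditional probability is $0$.

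The $s=q$ statement then follows as a special case: since $k_q=k_{(l)}$ is the overall maximum, the $q$-th component can support every jump size $1\le j\le k_{(l)}$, so on each band $k_{(p-1)}+1\le j\le k_{(p)}$ (with $k_{(0)}\coloneqq 0$) the numerator is $\lambda^{(q)}_j h+o(h)$, while the denominator cycles through the rates $\sum_{i=r_{p-1}}^q\lambda^{(i)}_j$ as $p$ runs from $1$ to $l$, producing the tabulated formula.

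The one step that requires real care, rather than routine manipulation, is the bookkeeping that identifies which processes are still ``alive'' for jump size $j$ on the band $k_{(p-1)}+1\le j\le k_{(p)}$, and the translation of that set into the index condition $s\ge r_{p-1}$ and then into the stated form $\sum_{i=1}^{p-1}d_i\le s-1$. Once that indexing is matched up cleanly with the definition of $\beta_j$ in \eqref{betaqjs}, the remainder is just substitution and passage to the limit $h\to 0$.
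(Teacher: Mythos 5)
Your proposal is correct and follows essentially the same route as the paper: write the conditional probability as a ratio, read the denominator off the merged-process jump law \eqref{jumppmf} with $\beta_j$ from \eqref{betaqjs}, note that $\{M_s(h)=j,\mathscr{M}^q(h)=j\}$ forces the other components to vanish so the numerator is $\lambda^{(s)}_j h+o(h)$ for $j\le k_s$ and $o(h)$ otherwise, and then sort out which band of $j$ one is in. If anything, your treatment is more complete than the paper's, which only writes out the cases $s=1,2,3$ and appeals to induction, whereas you carry out the index bookkeeping ($s\ge r_{p-1}$ iff $\sum_{i=1}^{p-1}d_i\le s-1$) for general $s$ directly; the only caveat is that the identification $k_{(s)}=k_s$ you mention holds only when the $k_i$ are distinct, though the correct criterion $j>k_s$ is what your argument actually uses.
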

\begin{proof}
	Using $\eqref{jumppmf}$ and following the similar lines of the proof of Proposition $\ref{pr1thm}$, for $s=1$, we have
	\begin{equation*}
		\mathrm{Pr}\{M_1(h)=j | \mathscr{M}^q(h)=j\}=
		\begin{cases}
			\frac{\lambda^{(1)}_j}{\sum_{i=1}^{q}\lambda^{(i)}_{j}}, \ 1\leq j \leq k_{(1)},
			\vspace*{.1cm}\\
			0, \ k_{(1)} +1\leq j\leq k_{(l)}.
		\end{cases}
	\end{equation*}
	Similarly, for $s=2$ and $s=3$, we get
	\begin{equation*}
		\mathrm{Pr}\{M_2(h)=j | \mathscr{M}^q(h)=j\}=
		\begin{cases}
			\frac{\lambda^{(2)}_{j}}{\sum_{i=1}^{q}\lambda^{(i)}_j}, \ 1\leq j \leq k_{(1)},
			\vspace*{.2cm}\\
			\frac{\lambda^{(2)}_{j}}{\sum_{i=2}^{q}\lambda^{(i)}_j}, \,k_{(1)}+1\leq j\leq k_{(2)},\, d_1=1,\\
			0, \,k_{(1)}+1\leq j\leq k_{(2)},\,d_1>1\, \text{or}\ k_{(2)}+1\leq j\leq k_{(l)},
		\end{cases}
	\end{equation*}
	and 
	\begin{equation*}
		\mathrm{Pr}\{M_3(h)=j | \mathscr{M}^q(h)=j\}=
		\begin{cases}
			\frac{\lambda^{(3)}_{j}}{\sum_{i=1}^{q}\lambda^{(i)}_j}, \ 1\leq j \leq k_{(1)},
			\vspace*{.2cm}\\
			\frac{\lambda^{(3)}_{j}}{\sum_{i=r_{1}}^{q}\lambda^{(i)}_j}, \,k_{(1)}+1\leq j\leq k_{(2)},\, 1\leq d_1\leq 2,
			\vspace*{.2cm}\\
			\frac{\lambda^{(3)}_{j}}{\sum_{i=r_{2}}^{q}\lambda^{(i)}_j}, \,k_{(2)}+1\leq j\leq k_{(3)},\,  d_1+d_2=2,
			\vspace*{.2cm}\\
			0, \,k_{(1)}+1\leq j\leq k_{(2)},\,d_1\geq3\, \text{or} \\ 
			\hspace{.4cm}  k_{(2)}+1\leq j\leq k_{(3)},\,  d_1+d_2\geq 3\, \text{or}\,\\ 
			\hspace{.4cm}  k_{(3)}+1\leq j\leq k_{(l)},
		\end{cases}
	\end{equation*}
	respectively. Proceeding inductively, we get the required result.
\end{proof}
\begin{remark}
	If $k_1<k_2<\dots<k_q$ then $k_{(i)}=k_i$ and $l=q$. Thus, for any $1\leq s\leq q$, we have
	\begin{equation*}
		\mathrm{Pr}\{M_s(h)=j|\mathscr{M}^q(h)=j\}=
		\begin{cases}
			\frac{\lambda^{(s)}_{j}}{\sum_{i=1}^{q}\lambda^{(i)}_j}, \ 1\leq j \leq k_1,\vspace{.2cm}\\
			\frac{\lambda^{(s)}_{j}}{\sum_{i=2}^{q}\lambda^{(i)}_j}, \ k_1+1\leq j \leq k_2,\\
			\ \ \ \ \ \vdots\\
			\frac{\lambda^{(s)}_{j}}{\sum_{i=s}^{q}\lambda^{(i)}_j}, \ k_{s-1}+1\leq j \leq k_s,\vspace*{.2cm}\\
			0, \ k_s +1\leq j\leq k_q,
		\end{cases}
	\end{equation*} 
	provided $o(h)/h\to0$ as $h\to0$. Equivalently, if we assume $\lambda_j^{(i)}=0$ for all $j>k_i$. Then, for any $1\leq s\leq q$, we have
	$
	\mathrm{Pr}\{M_s(h)=j| \mathscr{M}^q(h)=j\}={\lambda^{(s)}_{j}}/{\sum_{i=1}^{q}\lambda^{(i)}_j}, \ 1\leq j \leq k_q.
	$
\end{remark}
\begin{remark}
	For $k_1=k_2=\dots=k_q=k$ (say), the jump probabilities of $\{\mathscr{M}^q(t)\}_{t\geq0}$ are given by 
	\begin{equation*}
		\mathrm{Pr}\{\mathscr{M}^q(t,t+h]=i+j|\mathscr{M}^q(t)=i\}=\begin{cases}
			1-\sum_{i=1}^{q}\lambda^{(i)}_jh+o(h),\ j=0,\\
			\sum_{i=1}^{q}\lambda^{(i)}_jh+o(h),\   1\leq j\leq k,\\
			o(h),\ \mathrm{otherwise}.
		\end{cases}
	\end{equation*}
	In this case, for any $1\leq s \leq q$, we have
	$\mathrm{Pr}\{M_s(h)=j| \mathscr{M}^q(h)=j\}=
	{\lambda^{(s)}_{j}}/{\sum_{i=1}^{q}\lambda^{(i)}_j}$, $1\le j\le k$.
\end{remark}
\subsection{The case of merging of countably many GCPs} Let us consider a countable number of independent GCPs $\{M_1(t)\}_{t\geq0}$, $\{M_2(t)\}_{t\geq0}$, $\dots$, such that $\{M_i(t)\}_{t\geq0}$ for each $i\geq1$ performs $k_i$ kinds of jumps with positive rates $\lambda_1^{(i)},\lambda_2^{(i)},\dots,\lambda_{k_i}^{(i)}$, respectively. Without loss of generality, we can take $k_1\leq k_2\leq k_3\leq\dots$. As shown in Section \ref{Section 1.3}, the merged process $\{\mathscr{M}^\infty(t)\}_{t\geq 0}$ is a GCP which performs $k_{(q)}=\max\{k_1,k_2,\dots\}<\infty$ kinds of jumps. Its jump probabilities are given by \eqref{stcde}. For any $s\geq1$, we have
\begin{equation*}
	\mathrm{Pr}\{M_s(h)=j|\mathscr{M}^\infty(h)=j\}=
	\frac{\lambda^{(s)}_j}{\sum_{i=1}^{\infty}\lambda^{(i)}_{j}}, \ 1\leq j \leq k_{(q)},
\end{equation*}
provided $o(h)/h\to0$ as $h\to 0$.

\section{Jumps packet from a particular process}
Here, we consider $q$ independent GCPs  $\{M_1(t)\}_{t\geq0}$, $\{M_2(t)\}_{t\geq0}$, $\dots$, $\{M_q(t)\}_{t\geq0}$, such that $\{M_i(t)\}_{t\geq0}$ for each $1\leq i\leq q$ performs $k_i$ kinds of jumps of amplitude $1,2,\dots,k_i$ with positive rates $\lambda_1^{(i)},\lambda_2^{(i)},\dots,\lambda_{k_i}^{(i)}$, respectively. We assume $\lambda_j^{(i)}=0$ for $ j>k_i$ and for each $1\leq i\leq q$. Let $\{\mathscr{M}^q(t)\}_{t\geq0}$ be their merged process as defined in Section \ref{section 1.2}. For each $i$, let $A_i(t)$ denotes the number of packets of jumps registered by time $t$ in the merged process $\{\mathscr{M}^q(t)\}_{t\geq0}$ from the merging component $\{M_i(t)\}_{t\geq0}$. So, $\{A_i(t),\mathscr{M}^q(t)\}_{t\geq0}$ is a bivariate random process. In an infinitesimal interval of length $h$, the joint transition probabilities of $\{A_i(t),\mathscr{M}^q(t)\}_{t\geq0}$ are given by 
\begin{multline}\label{transcompo}
	\mathrm{Pr}\{A_i(t+h)=a+i,\mathscr{M}^q(t+h)=n+j|A_i(t)=a,\mathscr{M}^q(t)=n\}\\=\begin{cases}
		1-\Lambda h+o(h),\ i= j=0,\vspace{.1cm}\\
		\lambda_j^{(i)}h+o(h),\ i=1,\,1\leq j\leq k_i,\vspace{.1cm}\\
		\sum_{\substack{k=1\\ k\neq i}}^{q}\lambda_j^{(k)}h+o(h),\, i=0,\,1\leq j\leq k_q,\vspace{.1cm} \\
		o(h),\, \text{otherwise},
	\end{cases}
\end{multline} 
where $\Lambda=\sum_{j=1}^{k_q}\sum_{i=1}^{q}\lambda_j^{(i)}.$
\begin{proposition}
	The joint pmf $p(a,n,t)=\mathrm{Pr}\{A_i(t)=a,\mathscr{M}^q(t)=n\}$ solve the following system of differential equations:
	{\small	\begin{align}\label{compde}		\frac{\mathrm{d}}{\mathrm{d}t}p(a,n,t)&=-\Lambda p(a,n,t)+\sum_{j=1}^{k_i}\lambda_j^{(i)}p(a-1,n-j,t) +\sum_{l=1}^{k_q}\sum_{\substack{k=1\\
					k\neq i}}^{q}\lambda_l^{(k)}p(a,n-l,t),\, 0\leq a\leq n,
	\end{align}}
	\normalsize	with initial conditions
	\begin{equation}\label{compini}
		p(a,n,0)=\begin{cases}
			1, \, a=n=0,\\
			0,\, \text{otherwise}.
		\end{cases}
	\end{equation}
\end{proposition}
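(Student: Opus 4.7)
The plan is to derive the differential equation \eqref{compde} by a standard forward Chapman--Kolmogorov argument, using the infinitesimal transition probabilities \eqref{transcompo} together with the independent and stationary increments of the bivariate process $\{A_i(t), \mathscr{M}^q(t)\}_{t\geq 0}$. The independent and stationary increments of $\mathscr{M}^q$ were established in Section~\ref{Sec1} (Proposition~\ref{mm} and its extension to $q$ components), and they transfer to the bivariate process because $A_i(t)$ is a deterministic function of the sample path of the single component $\{M_i(t)\}_{t\geq 0}$, which is independent of the other merging components.

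First, I would condition on the state at time $t$ to write
\begin{align*}
p(a,n,t+h) &= \sum_{a'\leq a}\sum_{n'\leq n} p(a',n',t)\,\mathrm{Pr}\{A_i(t,t+h]=a-a',\mathscr{M}^q(t,t+h]=n-n'\}.
\end{align*}
Next, I would substitute the one-step transition probabilities from \eqref{transcompo}, retaining only the four contributing cases (stay, a jump from $\{M_i(t)\}_{t\geq 0}$ of size $1\leq j\leq k_i$ which raises $A_i$ by one and $\mathscr{M}^q$ by $j$, a jump from some $\{M_k(t)\}_{t\geq 0}$ with $k\neq i$ of size $1\leq l\leq k_q$ which leaves $A_i$ unchanged, and the $o(h)$ remainder). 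This yields
\begin{align*}
p(a,n,t+h) &= (1-\Lambda h)\,p(a,n,t) + \sum_{j=1}^{k_i}\lambda_j^{(i)}h\,p(a-1,n-j,t) \\
&\quad + \sum_{l=1}^{k_q}\Bigl(\sum_{\substack{k=1\\ k\neq i}}^{q}\lambda_l^{(k)}\Bigr)h\,p(a,n-l,t) + o(h).
\end{align*}

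Finally, I would subtract $p(a,n,t)$ from both sides, divide by $h$, and take $h\to 0$, which gives \eqref{compde}. The initial condition \eqref{compini} is immediate: at $t=0$ both $A_i(0)=0$ and $\mathscr{M}^q(0)=0$ with probability one. I would also note the natural boundary convention $p(a,n,t)=0$ whenever $a<0$ or $n<0$, so the sums in \eqref{compde} automatically vanish in the degenerate ranges.

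There is no real obstacle here; the argument is a routine forward-equation derivation, and the only point requiring care is the bookkeeping of the two disjoint sources of jumps (from $\{M_i(t)\}_{t\geq 0}$ versus from the remaining components) and the fact that they feed into different shifts of the index $a$. Keeping $\Lambda=\sum_{j=1}^{k_q}\sum_{i=1}^{q}\lambda_j^{(i)}$ as the aggregate rate and checking that the coefficients of $h$ on the right-hand side sum to $\Lambda$ provides a useful internal consistency check.
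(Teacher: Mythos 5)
Your proposal is correct and follows essentially the same route as the paper: condition on the state at time $t$, substitute the infinitesimal transition probabilities \eqref{transcompo}, rearrange, divide by $h$, and let $h\to 0$. The only additions — making the boundary convention $p(a,n,t)=0$ for negative indices explicit and checking that the coefficients of $h$ sum to $\Lambda$ — are harmless refinements of the paper's argument.
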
   
\begin{proof}
	We have\small
	\begin{align*}
		p(a,n&,t+h)=p(a,n,t)\mathrm{Pr}\{A_i(t+h)=a,\mathscr{M}^q(t+h)=n|A_i(t)=a,\mathscr{M}^q(t)=n\}\\&\ \ +\sum_{j=1}^{k_i}p(a-1,n-j,t)\mathrm{Pr}\{A_i(t+h)=a,\mathscr{M}^q(t+h)=n|A_i(t)=a-1,\mathscr{M}^q(t)=n-j\}\\&\ \ +\sum_{l=1}^{k_q}p(a,n-l,t)\mathrm{Pr}\{A_i(t+h)=a,\mathscr{M}^q(t+h)=n|A_i(t)=a,\mathscr{M}^q(t)=n-l\}.
	\end{align*}
	\normalsize		
	Using \eqref{transcompo}, we get
	\begin{equation*}
		p(a,n,t+h)=p(a,n,t)\left(1-\Lambda h\right)+\sum_{j=1}^{k_i}p(a-1,n-j,t)\lambda_j^{(i)}h+\sum_{l=1}^{k_q}\sum_{\substack{k=1\\ k\neq i}}^{q}p(a,n-l,t)\lambda_l^{(k)}h+o(h).
	\end{equation*}
	On rearranging the terms and letting  $h\to 0$, we get the required result.
\end{proof}
\begin{proposition}
	The joint pgf $G(u,v,t)=\mathbb{E}\left(u^{A_i(t)}v^{\mathscr{M}^q(t)}\right)$, $ |u|\leq1$, $ |v|\leq1$ solves the following differential equation:
	\begin{equation}\label{pgfcomp}
		\frac{\partial}{\partial t}G(u,v,t)=\Bigg(-\Lambda+\sum_{j=1}^{k_i}\lambda_j^{(i)}uv^j+\sum_{l=1}^{k_q}\sum_{\substack{k=1\\ k\neq i}}^{q}\lambda_l^{(k)}v^l\Bigg)G(u,v,t),
	\end{equation}
	with initial condition $G(u,v,0)=1$.
\end{proposition}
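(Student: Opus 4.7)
The plan is to derive the PDE for the joint pgf directly from the system of difference-differential equations \eqref{compde} for the joint pmf $p(a,n,t)$, using the standard generating-function technique: multiply both sides by $u^{a}v^{n}$ and sum over the admissible range $0\leq a\leq n$.

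More precisely, I would first verify that the series defining $G(u,v,t)=\sum_{a,n}u^{a}v^{n}p(a,n,t)$ converges absolutely for $|u|\leq 1$, $|v|\leq 1$ (since the coefficients form a probability mass function), so that term-by-term differentiation in $t$ and interchange of summation with $\partial/\partial t$ are justified. Applying $\sum_{a,n}u^{a}v^{n}$ to \eqref{compde} then gives three terms on the right-hand side:
\begin{equation*}
\frac{\partial}{\partial t}G(u,v,t)=-\Lambda\,G(u,v,t)+S_{1}+S_{2},
\end{equation*}
where $S_{1}$ comes from the $p(a-1,n-j,t)$ sum and $S_{2}$ from the $p(a,n-l,t)$ sum. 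For $S_{1}$, I would substitute $a'=a-1$, $n'=n-j$ and factor out $u\,v^{j}$, obtaining $S_{1}=\sum_{j=1}^{k_{i}}\lambda_{j}^{(i)}u\,v^{j}\,G(u,v,t)$. For $S_{2}$, the shift $n'=n-l$ (with $a$ unchanged) factors out $v^{l}$ and yields $S_{2}=\sum_{l=1}^{k_{q}}\sum_{k\neq i}\lambda_{l}^{(k)}v^{l}\,G(u,v,t)$. Collecting the three terms produces precisely \eqref{pgfcomp}.

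The initial condition $G(u,v,0)=1$ follows immediately by substituting \eqref{compini} into the definition of $G$, since only the $(a,n)=(0,0)$ term survives. The only genuine subtlety is bookkeeping at the lower ends of the index shifts: since $p(a,n,t)=0$ whenever $a<0$ or $n<0$ (by the initial condition and the structure of \eqref{compde}), the shifted double sums extend over all $a',n'\geq 0$ without boundary corrections, which is what lets the factor $G(u,v,t)$ cleanly come out. No step here is truly an obstacle; the proof is essentially a mechanical computation, and the main care needed is to ensure that the ranges of summation after the index substitutions really do match the full range used in the definition of $G(u,v,t)$.
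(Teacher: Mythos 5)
Your proposal is correct and follows exactly the paper's argument: multiply \eqref{compde} by $u^{a}v^{n}$, sum over $0\leq a\leq n$ and $n\geq 0$, and read off the initial condition from \eqref{compini}. The extra care you take with the index shifts and the justification of interchanging summation with $\partial/\partial t$ is sound but goes beyond what the paper writes down.
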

\begin{proof}
	From \eqref{compini}, we have $G(u,v,0)=\sum_{n=0}^{\infty}\sum_{a=0}^{n}u^{a}v^np(a,n,0)=1$.
	On multiplying $u^{a}v^n$ on both sides of \eqref{compde}, and summing over the range of $a= 0,1,\dots,n$ and $n=0,1,\dots$, we get the required result.
\end{proof}	

On solving \eqref{pgfcomp}, we get 
\begin{equation}\label{pgfam}
	G(u,v,t)=\exp\Bigg(-\Lambda +\sum_{j=1}^{k_i}\lambda_j^{(i)}uv^j+\sum_{l=1}^{k_q}\sum_{\substack{k=1\\ k\neq i}}^{q}\lambda_l^{(k)}v^l\Bigg)t. 
\end{equation}

\begin{theorem}
	The joint pmf $p(a,n,t)$ of $\{A_i(t),\mathscr{M}^q(t)\}_{t\geq0}$ is given by
	\begin{equation}\label{jointpmfam}
		p(a,n,t)=\sum_{\Theta(k_i,k_q,a,n)}\prod_{j=1}^{k_i}\prod_{l=1}^{k_q}\frac{(\lambda_j^{(i)}t)^{r_j}}{r_j!s_l!}\Bigg(\sum_{\substack{k=1\\ k\neq i}}^{q}\lambda_l^{(k)}t\Bigg)^{s_l}e^{-\Lambda t},
	\end{equation}
	where \scriptsize
	$
	\Theta(k_i,k_q,a,n)=\left\{(r_1,r_2,\dots,r_{k_i},s_1,s_2,\dots,s_{k_q})\in\mathbb{N}_0^{k_i+k_q}:\sum_{j=1}^{k_i}r_j=a,\,\sum_{j=1}^{k_i}jr_j+\sum_{l=1}^{k_q}ls_l=n\right\}.
	$
\end{theorem}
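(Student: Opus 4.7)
The plan is to derive the joint pmf by reading off the coefficient of $u^{a}v^{n}$ in the closed-form joint pgf given in \eqref{pgfam}. Since $G(u,v,t)=\mathbb{E}(u^{A_i(t)}v^{\mathscr{M}^q(t)})=\sum_{n=0}^{\infty}\sum_{a=0}^{n}p(a,n,t)u^{a}v^{n}$ on the bipolydisk $|u|,|v|\leq 1$, this coefficient extraction completely determines $p(a,n,t)$.

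First, I would split the exponential in \eqref{pgfam} as
\begin{equation*}
G(u,v,t)=e^{-\Lambda t}\,\exp\!\Bigl(t\sum_{j=1}^{k_i}\lambda_j^{(i)}uv^{j}\Bigr)\,\exp\!\Bigl(t\sum_{l=1}^{k_q}\sum_{\substack{k=1\\ k\neq i}}^{q}\lambda_l^{(k)}v^{l}\Bigr),
\end{equation*}
using $e^{A+B}=e^{A}e^{B}$ (all summands here are monomials, so they commute). I would then factor each remaining exponential into a product of single-monomial exponentials and Taylor-expand each one, yielding
\begin{equation*}
\exp\!\Bigl(t\sum_{j=1}^{k_i}\lambda_j^{(i)}uv^{j}\Bigr)=\prod_{j=1}^{k_i}\sum_{r_j=0}^{\infty}\frac{(\lambda_j^{(i)}t)^{r_j}}{r_j!}u^{r_j}v^{jr_j},
\end{equation*}
and an analogous expansion with summation variables $s_1,\ldots,s_{k_q}$ for the second factor (where the base in $s_l$'s term is $t\sum_{k\neq i}\lambda_l^{(k)}$).

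Multiplying the two expansions, a generic term carries $u^{\sum_{j}r_j}\,v^{\sum_{j}jr_j+\sum_{l}ls_l}$ with coefficient $\prod_{j=1}^{k_i}\frac{(\lambda_j^{(i)}t)^{r_j}}{r_j!}\prod_{l=1}^{k_q}\frac{1}{s_l!}\bigl(t\sum_{k\neq i}\lambda_l^{(k)}\bigr)^{s_l}e^{-\Lambda t}$. Demanding that the exponents match $u^{a}v^{n}$ forces $\sum_{j}r_j=a$ and $\sum_{j}jr_j+\sum_{l}ls_l=n$, which is exactly the definition of $\Theta(k_i,k_q,a,n)$. Summing the coefficients over this index set yields \eqref{jointpmfam}.

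The only real obstacle is indexing discipline: the $r_j$ count packets of size $j$ contributed by the distinguished component $\{M_i(t)\}_{t\geq 0}$, so they must sum to the packet count $a$; the $s_l$ count packets of size $l$ contributed collectively by the remaining components and contribute nothing to $a$; and the total increment of $\{\mathscr{M}^q(t)\}_{t\geq 0}$ equals the weighted sum $\sum_{j}jr_j+\sum_{l}ls_l=n$. As an independent consistency check, one could plug the candidate formula into \eqref{compde}--\eqref{compini} and verify the system term-by-term after a routine shift of summation indices, but the pgf route is substantially cleaner and avoids recomputing multinomial identities.
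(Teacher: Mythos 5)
Your proof is correct and follows essentially the same route as the paper: both start from the closed-form joint pgf \eqref{pgfam} and read off the coefficient of $u^{a}v^{n}$. The only (cosmetic) difference is that you factor the exponential into a product of single-monomial exponentials and Taylor-expand each, whereas the paper expands $e^{(\cdot)t}$ as a single power series and invokes the multinomial theorem; the resulting bookkeeping over $(r_1,\dots,r_{k_i},s_1,\dots,s_{k_q})$ is identical.
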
	
\normalsize\begin{proof}
	From \eqref{pgfam}, we have
	\begin{equation*}
		G(u,v,t)=e^{-\Lambda t}\sum_{r=0}^{\infty}\frac{1}{r!}\Bigg(\sum_{j=1}^{k_i}\lambda_j^{(i)}uv^j+\sum_{l=1}^{k_q}\sum_{\substack{k=1\\ k\neq i}}^{q}\lambda_l^{(k)}v^l\Bigg)^rt^r.
	\end{equation*}
	On applying multinomial theorem, we get
	\begin{equation*}
		G(u,v,t)=e^{-\Lambda t}\sum_{r=0}^{\infty}\sum_{S(k_i,k_q,r)}\prod_{j=1}^{k_i}\prod_{l=1}^{k_q}\frac{(\lambda_j^{(i)}t)^{r_j}}{r_j!s_l!}\Bigg(\sum_{\substack{k=1\\ k\neq i}}^{q}\lambda_l^{(k)}t\Bigg)^{s_l}u^{r_j}v^{jr_j+ls_l},
	\end{equation*}
	where $S(k_i,k_q,r)=\{(r_1,\dots,r_{k_i},s_1,\dots,s_{k_q}): \sum_{j=1}^{k_i}r_j+\sum_{l=1}^{k_q}s_l=r,\, r_j,s_l\in\{0,1,\dots,r\}\}$. Finally, on rearranging the terms and collecting the coefficient of $u^{a}v^n$, we get the required result.
\end{proof}	
\begin{remark}\label{Atpmf}
	On substituting $v=1$ in \eqref{pgfam}, we get the marginal pgf $G(u,t)=\mathbb{E}(u^{A_i(t)})$ of $\{A_i(t)\}_{t\geq0}$ in the following form:
	\begin{equation*}
		G(u,t)=e^{-\lambda^{(i)} t(1-u)},\, |u|\leq 1,
	\end{equation*}
	where $\lambda^{(i)}=\lambda^{(i)}_1+\lambda^{(i)}_2+\dots+\lambda^{(i)}_{k_i}$. Thus, for each $t\geq0$, $A_i(t)$ has Poisson distribution with parameter $\lambda^{(i)} t$. This can also be established by summing \eqref{jointpmfam} over the range $n=0,1,\dots$. Also, the process $\{A_i(t)\}_{t\geq0}$ has independent and stationary increments. Thus, $\{A_i(t)\}_{t\geq0}$ for each $1\leq i\leq q$ are independent Poisson processes with parameter $\lambda^{(i)}t$, respectively. So, the process $\{\mathscr{A}^q(t)\}_{t\geq0}$ that denotes the number of packets of jumps registered in the merged process $\{\mathscr{M}^q(t)\}_{t\geq0}$ by time $t$ is a Poisson process with parameter $\sum_{i=1}^{q}\lambda^{(i)}t$.
	Now,
	\begin{align}
		\mathrm{Pr}\{A_i(t)=a|\mathscr{A}^q(t)=b\}&=\frac{\mathrm{Pr}\{A_i(t)=a\}}{\mathrm{Pr}\left\{\sum_{j=1}^{q}A_j(t)=b\right\}}\mathrm{Pr}\Bigg\{\displaystyle\sum_{\substack{j=1\\j\ne i}}^{q}A_j(t)=b-a\Bigg\}\nonumber\\
		&=\frac{b!}{a!(b-a)!}\frac{\left(\lambda^{(i)}\right)^a}{\left(\sum_{j=1}^{q}\lambda^{(j)}\right)^b}\Bigg(\sum_{\substack{j=1\\j\ne i}}^{q}\lambda^{(j)}\Bigg)^{b-a},\label{binopmf}
	\end{align}
	where we have used the independence of $\{A_i(t)\}_{t\geq0}$ in the first step. Thus, conditional on the event $\{\mathscr{A}^q(t)=b\}$, the process $\{A_i(t)\}_{t\geq0}$ follows binomial distribution, that is, $A_i(t)\sim Bin\left(b,\frac{\lambda^{(i)}}{\sum_{j=1}^{q}\lambda^{(j)}}\right)$.

	On taking derivative of \eqref{pgfam} with respect to $u$ and $v$, we get
	\begin{align*}
		\frac{\partial^2}{\partial v\, \partial u}G(u,v,t)&=\left(\left(\sum_{j=1}^{k_i}j\lambda_j^{(i)}uv^{j-1}t+\sum_{l=1}^{k_q}\sum_{k=1}^{q}l\lambda_l^{(k)}v^{l-1}t\right)^2\right.\\
		&\ \ \left. + \left(\sum_{j=1}^{k_i}j(j-1)\lambda_j^{(i)}uv^{j-2}t+\sum_{l=1}^{k_q}\sum_{k=1}^{q}l(l-1)\lambda_l^{(k)}v^{l-2}t\right)\right)G(u,v,t).
	\end{align*}
	Therefore,
	\begin{align*}
		\mathbb{E}\left(A_i(t)\,\mathscr{M}^q(t)\right)&=\frac{\partial^2}{\partial v\,\partial u}G(u,v,t)\bigg|_{u=v=1}
		=\sum_{j=1}^{k_i}j\lambda_j^{(i)}t+\lambda^{(i)}\sum_{l=1}^{k_q}\sum_{k=1}^{q}l\lambda_l^{(k)}t^2.
	\end{align*}
	As the merged process $\{\mathscr{M}^q(t)\}$ is a GCP, thus from \eqref{meanvar}, we have $ \mathbb{E}(\mathscr{M}^q(t))=\sum_{j=1}^{k_q}\sum_{i=1}^{q}$ $j\lambda_j^{(i)}t$, $\operatorname{Var}(\mathscr{M}^q(t))=\sum_{j=1}^{k_q}\sum_{i=1}^{q}j^2\lambda_j^{(i)}t$. Also, from Remark \ref{Atpmf}, $\mathbb{E}(A_i(t))=\operatorname{Var}(A_i(t))=\lambda^{(i)}t$. Hence, the covariance between $\{A_i(t)\}_{t\geq0}$ and $\{\mathscr{M}^q(t)\}_{t\geq0}$ is given by
	\begin{equation*} \operatorname{Cov}\left(A_i(t),\mathscr{M}^q(t)\right)=\mathbb{E}(A_i(t)\,\mathscr{M}^q(t))-\mathbb{E}(A_i(t))\mathbb{E}(\mathscr{M}^q(t))=\sum_{j=1}^{k_i}j\lambda_j^{(i)}t
	\end{equation*}
	and their correlation coefficient is 
	\begin{equation*}
		\operatorname{Corr}\left(A_i(t),\mathscr{M}^q(t)\right)=\frac{\sum_{j=1}^{k_i}j\lambda_j^{(i)}}{\sqrt{\lambda^{(i)}}\sqrt{\sum_{j=1}^{k_q}\sum_{i=1}^{q}j^2\lambda_j^{(i)}}},
	\end{equation*}
	which is a constant and does not vary with time.
\end{remark}

\section{Splitting of jumps in GCP}
In this section, we discuss the splitting of a GCP into two components and then in $q\ge2$ split components. First, we consider the Type I splitting.
\subsection{Splitting of jumps in GCP- Type I}\label{section2}
Here, we study the splitting of jumps in a GCP. The simultaneous jumps in the split components are not allowed. First, we study the splitting in two component processes and then extend it to the case of $q$ component processes, where $q\geq2$ is any positive integer. Let  $\{M(t)\}_{t\geq0}$ be a GCP which performs $k$ kinds of jumps of amplitude $1,2,\dots, k$ with positive rates $\lambda_1,\lambda_2,\dots, \lambda_k$, respectively. 
\subsubsection{Splitting of jumps with a coin flip}
Let $p$-coin be a coin in which head appears with probability $p$. 
If a jump of size $j\in\{1,2,\dots,k\}$ gets register in the GCP $\{M(t)\}_{t\ge0}$ then a $p$-coin is tossed. If head appears then this $j$-size jump gets register in the first component process $\{M_1(t)\}_{t\geq0}$  otherwise it gets register in the second component process  $\{M_2(t)\}_{t\geq0}$.  Assuming that the outcomes of this coin flips are independent of $\{M(t)\}_{t\geq0}$, each $j$-size jump is routed independently to two component processes $\{M_1(t)\}_{t\geq0}$ and $\{M_2(t)\}_{t\geq0}$ with probability $p$ and $1-p$, respectively. 

In an infinitesimal  time interval of length $h$, such that $o(h)/h \to 0$ as $h\to0$, a jump of size $j\in\{1,2,\dots,k\}$ gets register in the process $\{M(t)\}_{t\geq0}$ with probability $\lambda_j h+o(h)$. In such a time interval, a $j$-size jump gets register either in  $\{M_1(t)\}_{t\geq0}$ with probability $\lambda_jph+o(h)$ or in $\{M_2(t)\}_{t\geq0}$    with probability $\lambda_j(1-p)h+o(h)$. This type of splitting is illustrated in Figure \ref{splitfig1}.

\begin{figure}[htp]
	\centering	\includegraphics[width=12cm]{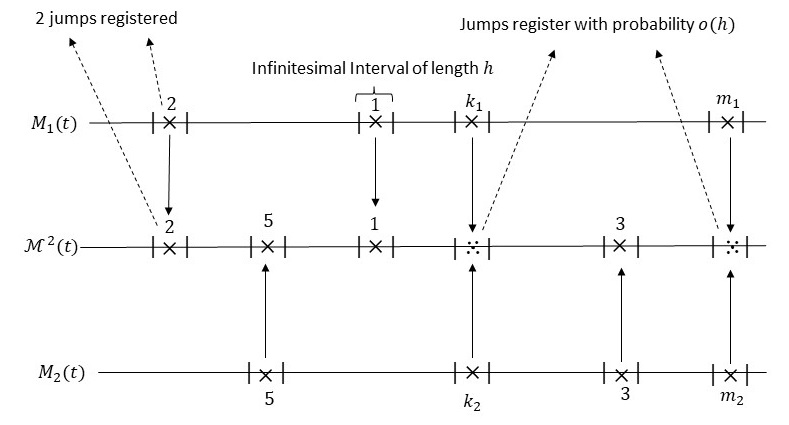}
	\caption{Splitting of GCP with $k=5$.}\label{splitfig1}	\end{figure}
\begin{remark}
	Observe that each $j$-size jump of the process $\{M(t)\}_{t\geq0}$ can be routed independently to either of the split component processes $\{M_1(t)\}_{t\geq0}$ or $\{M_2(t)\}_{t\geq0}$ according to independent Bernoulli trials with parameter $p$. Thus, splitting is governed by a Bernoulli process which is independent of the GCP $\{M(t)\}_{t\geq0}$.
\end{remark}
As the process $\{M(t)\}_{t\geq0}$ has independent and stationary increments and the division of jumps in the split components happens with independent coin flips, so the split component processes $\{M_1(t)\}_{t\geq0}$ and $\{M_2(t)\}_{t\geq0}$ have independent and stationary increments. Next, we explicitly show this in the following result:

Let $X(t_1,t_2]$ denote the increments of a counting process $\{X(t)\}_{t\geq0}$, that is,	$X(t_1,t_2]\coloneqq X(t_2)-X(t_1)$,  $t_2>t_1\geq0$. 
\begin{proposition}\label{prpindpst}
	The split component processes  $\{M_1(t)\}_{t\geq0}$ and $\{M_2(t)\}_{t\geq0}$ have independent and stationary increments.   	
\end{proposition}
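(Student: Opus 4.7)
The plan is to follow the approach of Proposition 2.1 with an added conditioning step on the jump composition of the underlying GCP. For any interval $(a,b]$, let $N_j(a,b]$ denote the number of $j$-size jumps of $\{M(t)\}_{t\geq 0}$ during $(a,b]$, and let $B_j(a,b]$ denote the number of these routed to $\{M_1(t)\}_{t\geq 0}$, so that
\[
M_1(a,b]=\sum_{j=1}^{k}j\,B_j(a,b],\qquad M_2(a,b]=\sum_{j=1}^{k}j\,\bigl(N_j(a,b]-B_j(a,b]\bigr).
\]
Because all coin flips are mutually independent and independent of $\{M(t)\}_{t\geq 0}$, conditional on $\{N_j(a,b]=\ell_j\}$ the $B_j(a,b]$ are independent Binomial$(\ell_j,p)$, and coin flips occurring in disjoint time intervals form disjoint (hence independent) collections of Bernoulli variables.

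For independent increments, fix $0\le t_0<t_1<\cdots<t_m$ and condition on the full configuration $\{N_j(t_{i-1},t_i]=\ell_{i,j}:1\le i\le m,\,1\le j\le k\}$. Given this configuration, the random variables $M_1(t_{i-1},t_i]$ for $i=1,\dots,m$ are determined by disjoint collections of coin flips and are therefore conditionally independent; moreover each $M_1(t_{i-1},t_i]$ has conditional distribution equal to that of $\sum_{j=1}^{k} j\cdot Y_{i,j}$, where the $Y_{i,j}$'s are independent Binomial$(\ell_{i,j},p)$. Summing over the $\ell_{i,j}$'s and invoking the independent-increments property of $\{M(t)\}_{t\geq 0}$, which forces the jump-count vectors in disjoint intervals to be mutually independent, produces
\[
\mathrm{Pr}\bigl\{M_1(t_{i-1},t_i]=n_i,\ 1\le i\le m\bigr\}=\prod_{i=1}^{m}\mathrm{Pr}\bigl\{M_1(t_{i-1},t_i]=n_i\bigr\},
\]
which is the required independent-increments identity for $\{M_1(t)\}_{t\geq 0}$. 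The identical argument with $p$ replaced by $1-p$ gives the result for $\{M_2(t)\}_{t\geq 0}$.

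For stationary increments, stationarity of $M$'s increments implies that the joint law of $(N_j(t,t+s])_{j=1}^k$ depends only on $s$; given these counts, the conditional law of $M_1(t,t+s]$ depends on them only through independent binomial thinnings with success probability $p$, with no explicit $t$-dependence. Unconditioning yields $M_1(t,t+s]\overset{d}{=}M_1(s)$, and the same reasoning applies to $M_2$.

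The only real obstacle is notational bookkeeping: one must label the coin flips by the temporal order of the jumps so that disjoint time intervals really do correspond to disjoint flip collections. Once this indexing is in place, the argument reduces to combining the independent and stationary increments of $\{M(t)\}_{t\geq 0}$ with the global independence of the routing flips, exactly as in the proof of Proposition 2.1.
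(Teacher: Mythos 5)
Your proposal is correct and follows essentially the same route as the paper: conditioning on the number of $j$-size jumps of $\{M(t)\}_{t\geq 0}$ in each interval, observing that the routing produces independent binomial thinnings, and then unconditioning using the independent and stationary increments of the parent GCP. The only cosmetic difference is that you condition directly on the jump-count vectors $(N_j)$ while the paper conditions on the aggregated increments $n_l$ and sums over the compatible decompositions $\Theta(k,n_{1l},n_{2l})$; these are the same computation organized in a different order.
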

\begin{proof}
	Let $0=t_0<t_1<\dots<t_r$. Consider
	$M(t_{l-1},t_l]=n_l=\sum_{j=1}^{k}j(n_l)_j$, $M_1(t_{l-1},t_l]= n_{1l}=\sum_{j=1}^{k}j(n_{1l})_j$ and $M_2(t_{l-1},t_l]= n_{2l}=\sum_{j=1}^{k}j(n_{2l})_j$ with $(n_l)_j=(n_{1l})_j+(n_{2l})_j$. Here, $(n_l)_j$, $(n_{1l})_j$ and $(n_{2l})_j$ are the total number of $j$-size jumps of $\{M(t)\}_{t\geq0}$,  $\{M_1(t)\}_{t\geq0}$ and $\{M_2(t)\}_{t\geq0}$ in $(t_{l-1},t_l]$, respectively. 
	Conditional on the total number of $j$-size jumps of $\{M(t)\}_{t\geq0}$ in interval $(t_{l-1},t_l]$, we have
	{\small
		\begin{align}
			\mathrm{Pr}\{M_1(t_{l-1},t_l]=j(n_{1l})_j,M_2(t_{l-1},t_l]=j(n_{2l})_j\big|M(t_{l-1},t_l]=j(n_{l})_j\}=\binom{(n_{l})_j}{(n_{1l})_j}p^{(n_{1l})_j}(1-p)^{(n_{2l})_j}.\label{conbin}
	\end{align}}
	Equation \eqref{conbin} is simply a binomial distribution because each $j$-size jump in $\{M(t)\}_{t\geq0}$ gets register independently in the process $\{M_1(t)\}_{t\geq0}$ with probability $p$. Thus, on using \eqref{conbin} for each time interval $(t_{l-1},t_l]$, $l\in\{1,2,\dots,r\}$, we have following conditional probability:
	{\small	\begin{align}
			\mathrm{Pr}&\{M_1(t_{l-1},t_{l}]=n_{1l},M_2(t_{l-1},t_l]=n_{2l}\big|M(t_{l-1},t_l]=n_{l}\}\nonumber\\
			&=\sum_{\Theta(k,n_{1l},n_{2l})}\mathrm{Pr}\bigg\{M_1(t_{l-1},t_{l}]=\sum_{j=1}^{k}j(n_{1l})_j,M_2(t_{l-1},t_l]=\sum_{j=1}^{k}j(n_{2l})_j\bigg|M(t_{l-1},t_l]=\sum_{j=1}^{k}j(n_{l})_j\bigg\}\nonumber\\
			& =\sum_{\Theta(k,n_{1l},n_{2l})}\prod_{j=1}^{k}\binom{(n_{l})_j}{(n_{1l})_j}p^{(n_{1l})_j}(1-p)^{(n_{2l})_j},\label{condiprbb}
	\end{align}}
	where
	{\small	\begin{equation*}
			\Theta(k,n_{1l},n_{2l})=\bigg\{(((n_{il})_j)_{j=1}^k)_{i=1}^2:\sum_{j=1}^{k}j(n_{1l})_j=n_{1l},\\ \sum_{j=1}^{k}j(n_{2l})_j=n_{2l}, (n_{1l})_j\geq0,\, (n_{2l})_j\geq0\bigg\}.
	\end{equation*}}
	
	\noindent Now, consider
	{\small\begin{align*}
			&\mathrm{Pr}\{M_1(t_0,t_1]= n_{11},M_1(t_1,t_2]=n_{12},\dots,M_1(t_{r-1},t_r]= n_{1r}\}\\
			&=\sum_{n_{21}\geq 0,\dots,n_{2r}\geq0}\mathrm{Pr}\{M_1(t_0,t_1]= n_{11},\dots,M_1(t_{r-1},t_r]= n_{1r},M_2(t_0,t_1]= n_{21},\dots,M_2(t_{r-1},t_r]= n_{2r}\}\\
			&=\sum_{n_{21}\geq 0,\dots,n_{2r}\geq0}\mathrm{Pr}\left( \cap_{l=1}^{r}\{M_1(t_{l-1},t_{l}]=n_{1l},M_2(t_{l-1},t_l]=n_{2l}\}\bigg|\cap_{l=1}^{r}\{M(t_{l-1},t_l]=n_{l}\}\right)\\
			&\hspace{3cm} \cdot\mathrm{Pr}\left( \cap_{l=1}^{r}\left\{M(t_{l-1},t_l]= n_{l}\right\}\right).
	\end{align*}}
	On using \eqref{condiprbb} and the independent increments property of $\{M(t)\}_{t\geq0}$, the last step reduces to
	\begin{align*}
		\mathrm{Pr}\{&M_1(t_0,t_1]= n_{11},M_1(t_1,t_2]=n_{12},\dots,M_1(t_{r-1},t_r]= n_{1r}\}\\
		&=\sum_{n_{21}\geq 0,\dots,n_{2r}\geq0}\mathrm{Pr}\left( \cap_{l=1}^{r}\{M(t_{l-1},t_l]= n_{l}\}\right)\prod_{l=1}^{r}\sum_{\Theta(k,n_{1l},n_{2l})}\prod_{j=1}^{k}\binom{(n_{l})_j}{(n_{1l})_j}p^{(n_{1l})_j}(1-p)^{(n_{2l})_j}\\
		&=\prod_{l=1}^{r}\sum_{n_{2l}\geq0}\mathrm{Pr}\{M(t_{l-1},t_l]= n_{l}\}\sum_{\Theta(k,n_{1l},n_{2l})}\prod_{j=1}^{k}\binom{(n_{l})_j}{(n_{1l})_j}p^{(n_{1l})_j}(1-p)^{(n_{2l})_j}\\
		&=\prod_{l=1}^{r}\mathrm{Pr}\{M_1(t_{l-1},t_l]= n_{1l}\}.
	\end{align*}
	Thus, the first component process $\{M_1(t)\}_{t\geq0}$ has independent increments.
	
	For some $s>0$, we let
	$M(t,t+s]=n=\sum_{j=1}^{k}j(n)_j$,    $M_1(t,t+s]=n_1=\sum_{j=1}^{k}j(n_1)_j$   and $M_2(t,t+s]=n_2=\sum_{j=1}^{k}j(n_2)_j
	$
	with $(n)_j=(n_1)_j+(n_2)_j$, where $(n)_j$, $(n_1)_j$ and $(n_2)_j$ are the total number of $j$-size jumps of $\{M(t)\}_{t\geq0}$,  $\{M_1(t)\}_{t\geq0}$ and $\{M_2(t)\}_{t\geq0}$ in  $(t,t+s]$, respectively. Using the similar argument, that is, each $j$-size jump of $\{M(t)\}_{t\geq0}$ independently registers in  $\{M_1(t)\}_{t\geq0}$ with probability $p$, we have following conditional probability: 
	{\small	\begin{align}
			\mathrm{Pr}&\{M_1(t,t+s]=n_{1},M_2(t,t+s]=n_{2}\big|M(t,t+s]=n\}\nonumber\\
			&=\sum_{\Theta(k,n_{1},n_{2})}\mathrm{Pr}\bigg\{M_1(t,t+s]=\sum_{j=1}^{k}j(n_{1})_j,M_2(t,t+s]=\sum_{j=1}^{k}j(n_{2})_j\bigg|M(t,t+s]=\sum_{j=1}^{k}j(n)_j\bigg\}\nonumber\\
			& =\sum_{\Theta(k,n_{1},n_{2})}\prod_{j=1}^{k}\binom{(n)_j}{(n_{1})_j}p^{(n_{1})_j}(1-p)^{(n_{2})_j},\label{condiprbbb}
	\end{align}}
	where 
	\begin{equation*} \Theta(k,n_1,n_2)=\bigg\{\big(((n_i)_j)_{j=1}^{k}\big)_{i=1}^2:\sum_{j=1}^{k}j(n_1)_{j}=n_1, \sum_{j=1}^{k}j(n_2)_j=n_2,\, (n_1)_j\geq0, (n_2)_j\geq0 \bigg\}.
	\end{equation*}
	Now, using  \eqref{condiprbbb} and the stationary increments property of $\{M(t)\}_{t\geq0}$, we have
	\begin{align*}
		\mathrm{Pr}&\{M_1(t,t+s]=n_1\}=\sum_{n_2\geq 0}\mathrm{Pr}\{M_1(t,t+s]=n_1,M_2(t,t+s]=n_2\}\\
		&=\sum_{n_2\geq 0}\mathrm{Pr}\{M_1(t,t+s]=n_{1},M_2(t,t+s]=n_{2}\big|M(t,t+s]=n\}\mathrm{Pr}\{M(t,t+s]=n\}\\
		&=\sum_{n_2\geq 0}\mathrm{Pr}\{M(t,t+s]=n\}\sum_{\Theta(k,n_1,n_2)}\prod_{j=1}^{k}\binom{(n)_j}{(n_1)_j}p^{(n_1)_j}(1-p)^{(n_2)_j}\\
		&=\sum_{n_2\geq 0} \mathrm{Pr}\{M(s)=n\}\sum_{\Theta(k,n_1,n_2)}\prod_{j=1}^{k}\binom{(n)_j}{(n_1)_j}p^{(n_1)_j}(1-p)^{(n_2)_j}\\
		&=\mathrm{Pr}\{M_1(s)=n_1\}.
	\end{align*}
	\normalsize	Hence, the  process $\{M_1(t)_{t\geq0}\}$ has stationary increments. Similarly, it can be shown that the results hold true for $\{M_2(t)\}_{t\geq0}$.	 
\end{proof}

Next, we show the independence of split component processes. It is sufficient to show their independence in the interval $(t,t+h]$, where $h$ is infinitesimal time duration. If the split component processes do not have simultaneous jumps in such a time interval then they cannot be independent. However, simultaneous jumps in the split components happen with probability of order $o(h)$. So, without ignoring this probability, we have the following result:
\begin{proposition}\label{prpind}
	The split component processes $\{M_1(t)\}_{t\geq0}$ and $\{M_2(t)\}_{t\geq0}$ are independent.
\end{proposition}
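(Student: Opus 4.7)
The plan is to prove the single-time factorization $\mathrm{Pr}\{M_1(t)=n_1,M_2(t)=n_2\}=\mathrm{Pr}\{M_1(t)=n_1\}\mathrm{Pr}\{M_2(t)=n_2\}$, and then lift it to process-level independence by applying the same conditioning scheme to each subinterval of an arbitrary partition $0=t_0<t_1<\dots<t_r$, using the independent and stationary increments of $M$ and of both components (Proposition \ref{prpindpst}). The core of the argument is the single-time step.

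Since Type I splitting routes every jump of $M$ to exactly one of $M_1,M_2$, we have $M(t)=M_1(t)+M_2(t)$ almost surely, so $\{M_1(t)=n_1,M_2(t)=n_2\}\subseteq\{M(t)=n_1+n_2\}$. I would decompose the latter event over the vector $((n)_j)_{j=1}^{k}$ of size-$j$ jump counts of $M$. Factoring the pgf \eqref{pgfgcp} as $\mathbb{E}(u^{M(t)})=\prod_{j=1}^{k}\exp(-\lambda_j(1-u^j)t)$ shows $M(t)\stackrel{d}{=}\sum_j jN_j(t)$ with $N_j(t)$ independent Poisson of rate $\lambda_j t$, so $\mathrm{Pr}\{N_j(t)=(n)_j,\,\forall j\}=\prod_j(\lambda_jt)^{(n)_j}e^{-\lambda_jt}/(n)_j!$. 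Conditionally on these counts, the independent $p$-coin flips split each size-$j$ batch into $(n_1)_j$ for $M_1$ and $(n_2)_j=(n)_j-(n_1)_j$ for $M_2$ in a Binomial way, independently across $j$, exactly as in \eqref{conbin}.

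Combining these two ingredients and using the identity $\binom{(n)_j}{(n_1)_j}(\lambda_jt)^{(n)_j}p^{(n_1)_j}(1-p)^{(n_2)_j}/(n)_j!=(\lambda_jpt)^{(n_1)_j}(\lambda_j(1-p)t)^{(n_2)_j}/((n_1)_j!(n_2)_j!)$ together with $e^{-\lambda_jt}=e^{-\lambda_jpt}e^{-\lambda_j(1-p)t}$, the double sum over $((n)_j)$ and the Binomial splits collapses: since $(n)_j$ is determined by $(n_1)_j+(n_2)_j$, only the sum over $\Theta(k,n_1,n_2)=\Omega(k,n_1)\times\Omega(k,n_2)$ remains. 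The summand is now a product of a term depending solely on $((n_1)_j)\in\Omega(k,n_1)$ and one depending solely on $((n_2)_j)\in\Omega(k,n_2)$, so the sum splits into a product of two copies of the GCP pmf \eqref{gcpnt}, with rates $\{\lambda_jp\}$ and $\{\lambda_j(1-p)\}$. This yields the desired factorization and, as a byproduct, identifies $M_1(t)$ and $M_2(t)$ as GCPs with the thinned rates.

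The main obstacle is the combinatorial bookkeeping in collapsing the two sums: one must reindex carefully so that summing first over $((n)_j)$ with $\sum_j j(n)_j=n_1+n_2$ and then over Binomial splits $(n_1)_j+(n_2)_j=(n)_j$ with $\sum_j j(n_1)_j=n_1$, $\sum_j j(n_2)_j=n_2$ becomes equivalent to a single sum over $\Theta(k,n_1,n_2)$. Once this reindexing is in place and the binomial-coefficient/factorial identity is applied, the factorization of the summand into $(n_1)$-part and $(n_2)$-part is essentially forced, and the process-level extension via independent increments is routine.
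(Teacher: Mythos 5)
Your proposal is correct and follows essentially the same route as the paper's proof: condition on the vector of size-$j$ jump counts of $M(t)$, split each batch binomially, apply the identity $\binom{(n)_j}{(n_1)_j}p^{(n_1)_j}(1-p)^{(n_2)_j}(\lambda_jt)^{(n)_j}/(n)_j!=(\lambda_jpt)^{(n_1)_j}(\lambda_j(1-p)t)^{(n_2)_j}/((n_1)_j!(n_2)_j!)$, and let the sum over $\Theta(k,n_1,n_2)$ factor into $\Omega(k,n_1)\times\Omega(k,n_2)$, with the multi-time case handled by the independent increments of $M$. The only cosmetic difference is that you justify the joint Poisson law of the jump counts via the pgf factorization, where the paper reads it off directly from the pmf \eqref{gcpnt}.
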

\begin{proof}
	Let $r>0$ and $0=t_0\leq t_1\leq t_2\leq\dots\leq t_r$. It is enough to show that the processes $\{M_1(t_l);\, 1\leq l\leq r \}$ and $\{M_2(t_q);\ 1\leq q\leq r \}$ are independent. Equivalently, it is sufficient to establish that $\{M_{1}(t_{l-1},t_l];\, 1\leq l\leq r\}$ and $\{M_{2}(t_{q-1},t_q];\, 1\leq q\leq r\}$ are independent.
	
	For $l=q$,  let $t_l=t_q=t>0$ and
	$
	M(t)=n=\sum_{j=1}^{k}j(n)_j$,  $M_1(t)=n_1=\sum_{j=1}^{k}j(n_1)_j$ and $M_2(t)=n_2=\sum_{j=1}^{k}j(n_2)_j$ 
	with $(n)_j=(n_1)_j+(n_2)_j$, where $(n)_j,\, (n_1)_j$ and $ (n_2)_j$ are total number of  $j$-size jumps that get register in $\{M(t)\}_{t\geq0}$, $ \{M_1(t)\}_{t\geq0}$ and $\{M_2(t)\}_{t\geq0}$ by time $t$, respectively.
	Since, each $j$-size jump gets register independently in the process  $\{M_1(t)\}_{t\geq0}$ with probability $p$, we have
	\begin{align*}
		\mathrm{Pr}\bigg\{M_1(t)=j(n_1)_j,\,  M_2(t)=j(n_2)_j\bigg|M(t)=j(n)_j\bigg\} 
		&=\binom{(n)_j}{(n_1)_j}p^{(n_1)_j}(1-p)^{(n_2)_j}.
	\end{align*}
	So, 
	\begin{align}
		\mathrm{Pr}\{M_1(t)&=n_1,\,  M_2(t)=n_2\big|M(t)=n\}\nonumber\\
		&=\sum_{\Theta(k,n_1,n_2)}\mathrm{Pr}\bigg\{M_1(t)=\sum_{j=1}^{k}j(n_1)_j,\,  M_2(t)=\sum_{j=1}^{k}j(n_2)_j\bigg|M(t)=\sum_{j=1}^{k}j(n)_j\bigg\}\nonumber\\
		&=\sum_{\Theta(k,n_1,n_2)}\prod_{j=1}^{k}\binom{(n)_j}{(n_1)_j}p^{(n_1)_j}(1-p)^{(n_2)_j},\label{condiprobb}
	\end{align}
	where 	\begin{equation*} \Theta(k,n_1,n_2)=\bigg\{\big(((n_i)_j)_{j=1}^k\big)_{i=1}^2:\sum_{j=1}^{k}j(n_1)_{j}=n_1, \sum_{j=1}^{k}j(n_2)_j=n_2,\, (n_1)_j\geq0, (n_2)_j\geq0 \bigg\}.
	\end{equation*}
	As the event $\{M_1(t)=n_1,\,  M_2(t)=n_2\}$ is contained in $\{M(t)=n\}$. Thus, on using \eqref{gcpnt} and \eqref{condiprobb}, we have
	\begin{align*}
		\mathrm{Pr}	\{M_1(t)&=n_1, M_2(t)=n_2\}=	\mathrm{Pr}\{M_1(t)=n_1,\,  M_2(t)=n_2\big|M(t)=n\}\mathrm{Pr}\{M(t)=n\}\\
		&=\sum_{\Theta(k,n_1,n_2)}\mathrm{Pr}\bigg\{M_1(t)=\sum_{j=1}^{k}j(n_1)_j,\,  M_2(t)=\sum_{j=1}^{k}j(n_2)_j\bigg|M(t)=\sum_{j=1}^{k}j(n)_j\bigg\}\\
		&\ \ \ \ \ \  \cdot\mathrm{Pr}\bigg\{M(t)=\sum_{j=1}^{k}j(n)_j\bigg\}\\
		&=\sum_{\Theta(k,n_1,n_2)}\prod_{j=1}^{k}\binom{(n)_j}{(n_1)_j}p^{(n_1)_j}(1-p)^{(n_2)_j}\prod_{j=1}^{k}\frac{(\lambda_jt)^{(n)_j}}{(n)_j!}e^{-\lambda_j t}\\
		&=\sum_{\Theta(k,n_1,n_2)}\prod_{j=1}^{k}\binom{(n)_j}{(n_1)_j}p^{(n_1)_j}(1-p)^{(n_2)_j}\frac{(\lambda_jt)^{(n)_j}}{(n)_j!}e^{-\lambda_j t}\\
		&=\sum_{\Omega(k,n_1)}\prod_{j=1}^{k}\frac{(\lambda_j pt)^{(n_1)_j}}{(n_1)_j!}e^{-\lambda_j pt}\sum_{\Omega(k,n_2)}\prod_{j=1}^{k}\frac{(\lambda_j(1-p)t)^{(n_2)_j}}{(n_2)_j!}e^{-\lambda_j(1-p)t}\\
		&=\mathrm{Pr}\{M_1(t)=n_1\}\mathrm{Pr}\{M_2(t)=n_2\},
	\end{align*}
	where  $\Omega(k,n_i)=\{((n_i)_1,(n_i)_2,\dots,(n_i)_k):\sum_{j=1}^{k}j(n_i)_j=n_i,\, (n_i)_j\geq0 \}$, $i=1,2$. 
	For $l\neq q$, the independence of split components follows from the independent increments of the GCP $\{M(t)\}_{t\geq0}$. This completes the proof.
\end{proof}
\begin{remark}
	We conclude that the split component processes $\{M_1(t)\}_{t\geq0}$ and $\{M_2(t)\}_{t\geq0}$ are independent GCPs which perform $k$ kinds of jumps of size $j\in\{1,2,\dots,k\}$ with positive rates $\lambda_j p$ and $\lambda_j (1-p)$, respectively. 
\end{remark}
For $k=1$, the above result reduces to the case of Poisson process, that is, the split components are Poisson processes.
\subsubsection{Splitting of jumps on rolling a $q$-faced die}
In this case, we split the $j$-size jumps into $q$ component processes. Whenever a jump of size $j$ gets register in the process $\{M(t)\}_{t\ge0}$, we roll a $q-$faced die. If $i$ appears then the $j$-size jump is considered to get register in the process $\{M_i(t)\}_{t\geq0}$. Let $p_i$ be the probability that $i$ appears on this $q$-faced die. So, $\sum_{i=1}^{q}p_i=1$. It is assumed that the outcomes on rolling this $q$-faced die are independent of $\{M(t)\}_{t\geq0}$. Thus, in an infinitesimal time interval of length $h$, a jump of size $j\in\{1,2,\dots,k\}$ registers in the process $\{M(t)\}_{t\geq0}$ with probability $\lambda_jh+o(h)$ which then gets register in the component process $\{M_i(t)\}_{t\geq0}$, $i\in\{1,2,\dots,q\}$ with probability  $\lambda_jp_ih+o(h)$. 

As the process $\{M(t)\}_{t\geq0}$ has independent and stationary increments and the splitting of jumps into $q$ components happens with the independent rolls of a $q$-faced die, so for each $i\in\{1,2,\dots,q\}$ the split component process $\{M_i(t)\}_{t\geq0}$ has independent and stationary increments. 
\begin{proposition}
	The split component processes  $\{M_i(t)\}_{t\geq0}$, $i\in\{1,2,\dots,q\}$ has independent and stationary increments.    
\end{proposition}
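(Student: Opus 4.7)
The plan is to adapt the argument of Proposition \ref{prpindpst} from the binary (coin flip) case to the $q$-ary (die roll) case, replacing the Bernoulli routing of each $j$-size jump by a multinomial routing governed by $(p_1,p_2,\dots,p_q)$. Since the outcomes of the die rolls are assumed independent of $\{M(t)\}_{t\geq0}$, and each $j$-size jump of the parent GCP is independently assigned to component $i$ with probability $p_i$, the only real change is that the binomial factor $\binom{(n)_j}{(n_1)_j}p^{(n_1)_j}(1-p)^{(n_2)_j}$ in the earlier proof is replaced by the multinomial factor $\binom{(n)_j}{(n_{1})_j,(n_{2})_j,\dots,(n_{q})_j}\prod_{i=1}^{q}p_i^{(n_i)_j}$.

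First, I fix $0=t_0<t_1<\dots<t_r$ and write $M(t_{l-1},t_l]=\sum_{j=1}^{k}j(n_l)_j$ and $M_i(t_{l-1},t_l]=\sum_{j=1}^{k}j(n_{il})_j$, with $(n_l)_j=\sum_{i=1}^{q}(n_{il})_j$. Conditional on the numbers of $j$-size jumps of the parent process in each interval, the joint conditional probability
\begin{equation*}
\mathrm{Pr}\Big\{\bigcap_{l=1}^{r}\bigcap_{i=1}^{q}\{M_i(t_{l-1},t_l]=n_{il}\}\ \Big|\ \bigcap_{l=1}^{r}\{M(t_{l-1},t_l]=n_l\}\Big\}
\end{equation*}
factors, over $l$ and over $j$, into a product of multinomial mass functions by the independence of the successive die rolls. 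I then multiply by $\mathrm{Pr}\big(\bigcap_{l=1}^{r}\{M(t_{l-1},t_l]=n_l\}\big)$, which factors over $l$ by the independent increments of $\{M(t)\}_{t\geq0}$, and sum out the counts $n_{il}$ for $i\neq 1$ interval by interval. Exactly as in the binary case (cf.\ the computation following \eqref{condiprbb}), this sum collapses to the product $\prod_{l=1}^{r}\mathrm{Pr}\{M_1(t_{l-1},t_l]=n_{1l}\}$, giving independent increments for $\{M_1(t)\}_{t\geq0}$; the same argument applies to every $\{M_i(t)\}_{t\geq0}$.

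For stationarity, I fix $s>0$ and set $M(t,t+s]=n=\sum_{j=1}^{k}j(n)_j$, $M_i(t,t+s]=n_i=\sum_{j=1}^{k}j(n_i)_j$ with $(n)_j=\sum_{i=1}^{q}(n_i)_j$. The conditional law of $(n_1,\dots,n_q)$ given $M(t,t+s]=n$ depends on $t$ only through $n$, and by the stationarity of $\{M(t)\}_{t\geq0}$ the marginal $\mathrm{Pr}\{M(t,t+s]=n\}$ equals $\mathrm{Pr}\{M(s)=n\}$. Summing out the counts $n_2,\dots,n_q$ against the multinomial weights, mirroring the last displayed computation of Proposition \ref{prpindpst} with the multinomial replacing the binomial, yields $\mathrm{Pr}\{M_i(t,t+s]=n_i\}=\mathrm{Pr}\{M_i(s)=n_i\}$ for each $i$, and an analogous computation with the full vector $(n_1,\dots,n_q)$ gives stationarity of the joint increments.

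The main obstacle is not conceptual but notational: keeping track of the multi-indexed sums over $\Theta(k,n_{1l},\dots,n_{ql})$ in a way that makes clear the factorization into Poisson-type products $\prod_{j=1}^{k}\frac{(\lambda_j p_i t)^{(n_i)_j}}{(n_i)_j!}e^{-\lambda_j p_i t}$ for each component. This is handled by writing $\binom{(n)_j}{(n_1)_j,\dots,(n_q)_j}\prod_{i=1}^{q}p_i^{(n_i)_j}\cdot\frac{(\lambda_j t)^{(n)_j}}{(n)_j!}e^{-\lambda_j t}=\prod_{i=1}^{q}\frac{(\lambda_j p_i t)^{(n_i)_j}}{(n_i)_j!}e^{-\lambda_j p_i t}$ (using $\sum_i p_i=1$ in the exponential), after which the sums over $\Theta$ factor cleanly into the $\Omega(k,n_i)$ sums that represent the individual GCP marginals given in \eqref{gcpnt}.
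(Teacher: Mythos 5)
Your proposal is correct and follows essentially the same route as the paper: condition on the per-interval counts of $j$-size jumps of the parent GCP, replace the binomial routing factor of Proposition \ref{prpindpst} by the multinomial factor over $(p_1,\dots,p_q)$, invoke the independent and stationary increments of $\{M(t)\}_{t\geq0}$ to factor over intervals and to shift $(t,t+s]$ to $(0,s]$, and then sum out the other components. The factorization identity you record at the end, collapsing the multinomial-weighted sum into products of the form $\prod_{j=1}^{k}\frac{(\lambda_j p_i t)^{(n_i)_j}}{(n_i)_j!}e^{-\lambda_j p_i t}$, is exactly the mechanism the paper uses here and in the subsequent independence proposition.
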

\begin{proof}
	For any $r\in \mathbb{N}$ and $0=t_0<t_1<\dots<t_r$. Let
	$
	M(t_{l-1},t_l]=n_l=\sum_{j=1}^{k}j(n_l)_j$ and $ M_i(t_{l-1},t_l]=n_{il}=\sum_{j=1}^{k}j(n_{il})_j$ with $(n_l)_j=\sum_{i=1}^{q}(n_{il})_j$, where $(n_l)_j$ and $(n_{il})_j$ are the total number of $j$-size jumps of $\{M(t)\}_{t\geq0}$ and $\{M_i(t)\}_{t\geq0}$ in $(t_{l-1},t_l]$, respectively.
	As each $j$-size jump of $\{M(t)\}_{t\geq0}$ gets register independently in the component process $\{M_i(t)\}_{t\geq0}$ with probability $p_i$. So, conditional on the total number of $j$-size jumps of $\{M(t)\}_{t\geq0}$ in interval $(t_{l-1},t_l]$, we have
	\begin{align}
		\mathrm{Pr}\bigg\{\cap_{i=1}^{q}\{M_i(t_{l-1},t_l]=j(n_{il})_j\}\big|M(t_{l-1},t_l]=j(n_{l})_j\bigg\}
		=\prod_{i=1}^{q}\frac{(n_l)_j!}{(n_{il})_j!}p_{i}^{(n_{il})_j}.\label{conbinq}
	\end{align}
	On using equation \eqref{conbinq}, we have the following conditional probability:
	\begin{align}
		\mathrm{Pr}&\bigg(\cap_{i=1}^{q}\{M_i(t_{l-1},t_{l}]=n_{il}\}\big|M(t_{l-1},t_l]=n_l\bigg)\nonumber\\
		&=\sum_{\Theta(k,(n_{il})_{i=1}^{q})}\mathrm{Pr}\bigg(\cap_{i=1}^{q}\big\{M_i(t_{l-1},t_{l}]=\sum_{j=1}^{k}j(n_{il})_j\big\}\bigg|M(t_{l-1},t_l]=\sum_{j=1}^{k}j(n_l)_j\bigg)\nonumber\\
		&=\sum_{\Theta(k,(n_{il})_{i=1}^{q})}\prod_{j=1}^{k}\prod_{i=1}^{q}\frac{(n_l)_j!}{(n_{il})_j!}p_{i}^{(n_{il})_j},\label{condiprbbbb}
	\end{align}
	where
	\begin{equation*}
		\Theta\big(k,(n_{il})_{i=1}^{q}\big)=\bigg\{(((n_{il})_j)_{j=1}^{k})_{i=1}^{q}:\sum_{j=1}^{k}j(n_{il})_j=n_{il},\ \forall\ i=\{1,2,\dots,q\},\ (n_{il})_j\geq 0\bigg\}.
	\end{equation*}
	
	Thus, on following the similar steps as in the proof of Proposition $\ref{prpindpst}$ and  using  \eqref{condiprbbbb} along with the independent increments of $\{M(t)\}_{t\geq0}$, we have 
	\begin{align*}
		\mathrm{Pr}\{M_i(t_0,t_1]&= n_{i1},M_i(t_1,t_2]= n_{i2},\dots,M_i(t_{r-1},t_r]= n_{ir}\}\\
		&=\sum_{\Theta}\mathrm{Pr}\left( \displaystyle \cap_{l=1}^{r}\{M(t_{l-1},t_l]= n_{l}\}\right)\prod_{l=1}^{r}\sum_{\Theta\big(k,(n_{il})_{i=1}^{q}\big)}\prod_{j=1}^{k}\prod_{i=1}^{q}\frac{(n_l)_j!}{(n_{il})_j!}p_i^{(n_{il})_j}\label{eq1}\\
		&=\prod_{l=1}^{r}\sum_{\Theta(l)}\mathrm{Pr}\{M(t_{l-1},t_l]= n_l\}\sum_{\Theta\big(k,(n_{il})_{i=1}^{q}\big)}\prod_{j=1}^{k}\prod_{i=1}^{q}\frac{(n_l)_j!}{(n_{il})_j!}p_i^{(n_{il})_j}\\
		&=\prod_{l=1}^{r}\mathrm{Pr}\{M_i(t_{l-1},t_l]= n_{il}\},
	\end{align*}
	where
	$\Theta=\{n_{ab}\geq0:a\in\{1,2,\dots,i-1,i+1,\dots,q\},\, b\in\{1,2,\dots,r\}\}$ and  $\Theta(l)=\{n_{al}\geq0:a\in\{1,2,\dots,i-1,i+1,\dots,q\}\}$. Thus for each $i$, $\{M_i(t)\}_{t\geq0}$ has independent increments. 
	
	Now, for $s>0$, we have
	$
	M(t,t+s]=n=\sum_{j=1}^{k}j(n)_j\   \text{and} \  M_i(t,t+s]=n_i=\sum_{j=1}^{k}j(n_i)_j
	$
	with $(n)_j=\sum_{i=1}^{q}(n_i)_j$, where $(n)_j$ and $(n_i)_j$ denote the number of $j$-size jumps of $\{M(t)\}_{t\geq0}$ and $\{M_i(t)\}_{t\geq0}$ in $(t,t+s]$, respectively. Similar to \eqref{condiprbbbb}, for time interval $(t,t+s]$, we have the following conditional probability:
	\begin{equation}
		\mathrm{Pr}\bigg(\cap_{i=1}^{q}\{M_i(t,t+s]=n_{i}\}\big|M(t,t+s]=n\bigg)=\sum_{\Theta\big(k,(n_i)_{i=1}^{q}\big)}\prod_{i=1}^{q}\prod_{j=1}^{k}\frac{(n)_j!}{(n_i)_j!}p_i^{(n_i)_j},\label{condist}
	\end{equation}
	where $\Theta\big(k,(n_i)_{i=1}^{q}\big)=\{(((n_i)_j)_{j=1}^{k})_{i=1}^{q}:\sum_{j=1}^{k}j(n_i)_j=n_i,\ \forall i=1,2,\dots,q,\, (n_i)_j\geq0\}$.
	Thus, on using \eqref{condist} and following the similar lines to that of the proof of  Proposition $\ref{prpindpst}$, we have 
	\begin{align*}
		\mathrm{Pr}\{M_i(t,t+s]=n_{i}\}&=\sum_{\Theta'}\mathrm{Pr}\{M(t,t+s]=n\}\sum_{\Theta\big(k,(n_i)_{i=1}^{q}\big)}\prod_{i=1}^{q}\prod_{j=1}^{k}\frac{(n)_j!}{(n_i)_j!}p_i^{(n_i)_j}\\
		&=\sum_{\Theta'}\mathrm{Pr}\{M(s)=n\}\sum_{\Theta\big(k,(n_i)_{i=1}^{q}\big)}\prod_{i=1}^{q}\prod_{j=1}^{k}\frac{(n)_j!}{(n_i)_j!}p_i^{(n_i)_j}\\
		&=\mathrm{Pr}\{M_i(s)=n_i\},
	\end{align*}
	where the penultimate step follows on using the stationary increments property of the process $\{M(t)\}_{t\geq0}$. Here, $\Theta'=\{n_a>0:a\in\{1,2,\dots,i-1,i+1,\dots,q\}\}$. Hence, for each $i$, the split component process $\{M_i(t)\}_{t\geq0}$ has stationary increments.
\end{proof}
\begin{proposition}
	The split component processes $\{M_i(t)\}_{t\geq0},\, 1\leq i\leq q$ are independent.
\end{proposition}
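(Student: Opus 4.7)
The plan is to extend the argument of Proposition \ref{prpind} from binary splitting to the case of $q$ components. By the independent increments of the parent GCP $\{M(t)\}_{t\geq0}$ together with the independence of the successive die rolls from $\{M(t)\}_{t\geq0}$, it suffices to establish independence of $M_1(t),M_2(t),\ldots,M_q(t)$ at a single fixed time $t>0$; across distinct time intervals, full joint independence will follow from the independent increments of $\{M(t)\}_{t\geq0}$ just as in the binary case.

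First I would decompose each $M_i(t)$ by jump size, writing $M_i(t)=\sum_{j=1}^{k}j(n_i)_j$, where $(n_i)_j$ denotes the total number of $j$-size jumps routed to the $i$th component by time $t$, and similarly $M(t)=\sum_{j=1}^{k}j(n)_j$ with $(n)_j=\sum_{i=1}^{q}(n_i)_j$. Since each $j$-size jump of $\{M(t)\}_{t\geq0}$ is routed independently to the components according to a $q$-faced die roll, the conditional joint distribution of $((n_1)_j,\ldots,(n_q)_j)$ given $(n)_j$ is multinomial with parameters $(n)_j$ and $(p_1,\ldots,p_q)$. Carrying out this multinomial decomposition simultaneously for all jump sizes $j=1,2,\ldots,k$ yields precisely the conditional probability \eqref{condist} obtained in the previous proposition.

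Multiplying this conditional probability by $\mathrm{Pr}\{M(t)=n\}$ from \eqref{gcpnt}, the factorials $(n)_j!$ cancel against the multinomial coefficients, and the exponential $e^{-\lambda_j t}$ splits as $\prod_{i=1}^{q}e^{-\lambda_j p_i t}$ because $\sum_{i=1}^{q}p_i=1$. After rearranging the summation, the joint probability factors as
\begin{equation*}
\mathrm{Pr}\{M_1(t)=n_1,\ldots,M_q(t)=n_q\}=\prod_{i=1}^{q}\sum_{\Omega(k,n_i)}\prod_{j=1}^{k}\frac{(\lambda_j p_i t)^{(n_i)_j}}{(n_i)_j!}e^{-\lambda_j p_i t}=\prod_{i=1}^{q}\mathrm{Pr}\{M_i(t)=n_i\},
\end{equation*}
where the last equality uses \eqref{gcpnt} applied to each component GCP with rates $\lambda_j p_i$.

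The main obstacle is purely bookkeeping: keeping track of the nested index sets $\Theta(k,(n_i)_{i=1}^{q})$ and $\Omega(k,n_i)$, and verifying that summation over the complementary indices $n_2,\ldots,n_q$ (for the increments case) and the matching of the constraints $\sum_{j}j(n_i)_j=n_i$ really does decouple the product. There is no genuinely new probabilistic ingredient beyond the binary case once the multinomial factorization is in place, so the argument mirrors Proposition \ref{prpind} step by step, with Bernoulli coefficients $p^{(n_1)_j}(1-p)^{(n_2)_j}$ replaced by the multinomial weight $\prod_{i=1}^{q}p_i^{(n_i)_j}$.
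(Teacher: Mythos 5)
Your proposal is correct and follows essentially the same route as the paper: condition on the number of $j$-size jumps of the parent process for each $j$, use the multinomial distribution of the routing, multiply by the GCP pmf \eqref{gcpnt} so that the factorials cancel and $e^{-\lambda_j t}=\prod_{i=1}^{q}e^{-\lambda_j p_i t}$, and then invoke independent increments of $\{M(t)\}_{t\geq0}$ to handle distinct time intervals. The only cosmetic difference is that you cite the conditional probability \eqref{condist} from the increments proposition, while the paper rederives the analogous identity at a single time $t$; the computation is the same.
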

\begin{proof}
	Let $r>0$ and $0=t_0\leq t_1\leq t_2\leq\dots\leq t_r$. It is enough to show that the processes $\{M_i(t_{l_i}):\, 1\leq l_i\leq r \}$, $i=\{1,2,\dots,q\}$ are independent. Equivalently, it is sufficient to establish that the processes $\{M_{i}(t_{l_i-1},t_{l_i}]:\, 1\leq l_i\leq r\}$ are independent. Let  $t_{l_i}=t$ for all $l_i$ and  
	$
	M(t)=n=\sum_{j=1}^{k}j(n)_j,\ M_i(t)=n_i=\sum_{j=1}^{k}j(n_i)_j,
	$
	such that $(n)_j=(n_1)_j+(n_2)_j+\dots+(n_q)_j$. Here, $(n)_j$ and $ (n_i)_j$ are the total number of $j$-size jumps of  $\{M(t)\}_{t\geq0}$ and $ \{M_i(t)\}_{t\geq0}$ by time $t$, respectively.
	Since, different jumps of size $j$ in $\{M(t)\}_{t\geq0}$ independently goes to $\{M_i(t)\}_{t\geq0}$ with probability $p_i$, therefore, conditioning on the number of $j$-size jumps in $\{M(t)\}_{t\geq0}$, we have
	\begin{equation*}
		\mathrm{Pr}\{M_1(t)=j(n_1)_j,\,  M_2(t)=j(n_2)_j,\dots,M_q(t)=j(n_q)_j\big|M(t)=j(n)_j\} = \prod_{i=1}^{q}\frac{(n)_j!}{(n_i)_j!}p_i^{(n_i)_j}.
	\end{equation*}
	So, 
	\begin{align}
		\mathrm{Pr}\bigg(\cap_{i=1}^{q}\big\{M_i(t)=n_i\big\}&\big|M(t)=n\bigg)\nonumber\\
		&=\sum_{\Theta(k,(n_i)_{i=1}^{q})}\mathrm{Pr}\bigg(\cap_{i=1}^{q}\big\{M_i(t)=\sum_{j=1}^{k}j(n_i)_j\big\}\bigg|M(t)=\sum_{j=1}^{k}j(n)_j\bigg)\nonumber\\
		&=\sum_{\Theta(k,(n_i)_{i=1}^{q})}\prod_{j=1}^{k}\prod_{i=1}^{q}\frac{(n)_j!}{(n_i)_j!}p_i^{(n_i)_j},\label{qjprb}
	\end{align}
	where	$\Theta\big(k,(n_i)_{i=1}^{q}\big)=\{(((n_i)_j)_{j=1}^{k})_{i=1}^{q}:\sum_{j=1}^{k}j(n_i)_j=n_i\ \forall i=1,2,\dots,q,\, (n_i)_j\geq0\}$.
	
	As the event $\{M_1(t)=n_1,\,  M_2(t)=n_2,\dots,M_q(t)=n_q\}$ is contained in $\{M(t)=n\}$, we have
	\begin{align*}
		\mathrm{Pr}&\{M_1(t)=n_1, M_2(t)=n_2,\dots,M_q(t)=n_q\}\\
		&=\mathrm{Pr}\bigg(\cap_{i=1}^{q}\big\{M_i(t)=n_i\big\}\big|M(t)=n\bigg)\mathrm{Pr}\{M(t)=n\}\\
		&=\sum_{\Theta(k,(n_i)_{i=1}^{q})}\mathrm{Pr}\bigg(\cap_{i=1}^{q}\big\{M_i(t)=\sum_{j=1}^{k}j(n_i)_j\big\}\bigg|M(t)=\sum_{j=1}^{k}j(n)_j\bigg)\mathrm{Pr}\bigg\{M(t)=\sum_{j=1}^{k}j(n)_j\bigg\}\\
		&=\sum_{\Theta(k,(n_i)_{i=1}^{q})}\prod_{j=1}^{k}\prod_{i=1}^{q}\frac{(n)_j!}{(n_i)_j!}p_i^{(n_i)_j}\prod_{j=1}^{k}\frac{(\lambda_j t)^{(n)_j}}{(n)_j!}e^{-\lambda_jt},\ \text{(using \eqref{qjprb})}\\
		&=\prod_{i=1}^{q}\sum_{\Omega(k,n_i)}\prod_{j=1}^{k}\frac{(\lambda_jp_it)^{(n_i)_j}}{(n_i)_j!}e^{-\lambda_jp_it}.
	\end{align*}
	Here, $\Omega(k,n_i)=\{((n_i)_1,(n_i)_2,\dots,(n_i)_k):\sum_{j=1}^{k}j(n_i)_j=n_i, (n_i)_j\geq0\}$. For different $l_i$, that is, for different time intervals, the independence of split components follows from the independent increments of GCP $\{M(t)\}_{t\geq0}$. Thus, for each $i$, the split component processes $\{M_i(t)\}_{t\geq0}$ are independent.
\end{proof}
\begin{remark}
	We conclude that the split component processes $\{M_i(t)\}_{t\geq0}$, $i\in\{1,2,\dots,q\}$ are independent GCPs which perform $k$ kinds of jumps of amplitude $j\in\{1,2,\dots,k\}$ with positive rates $\lambda_jp_i$. 
\end{remark} 

\subsection{Splitting of jumps in GCP- Type II}\label{sect3}
Here, we consider a different kind of splitting of jumps in a GCP in which there can be simultaneous jumps in the split components. First, we study the splitting in two component processes and then extend it to the case of $q\geq2$ component processes. 
\subsubsection{Splitting of jumps in two component processes} In this case, we split the jumps of GCP $\{M(t)\}_{t\geq0}$ into two component processes with flips of a $p$-coin. If a jump of size $j\in\{1,2,\dots, k\}$ gets register in $\{M(t)\}_{t\geq0}$ then we flip a $p$-coin $j$ many times and choose to divide the jump into two component processes $\{M_1(t)\}_{t\geq0}$ and $\{M_2(t)\}_{t\geq0}$ according to the number of heads and tails that appear on $j$ flips. Thus, the number of this coin flip is random. As assumed in the previous section, the outcomes on the flips of $p$-coin are independent of $\{M(t)\}_{t\geq0}$. If $j_1$ is the total number of heads and $j_2$ is the total number of tails that appear on $j$ flips of $p$-coin then a jump of size $j_1$ gets register in the first  component process $\{M_1(t)\}_{t\geq0}$ and a jump of size $j_2$ gets register in the second component  $\{M_2(t)\}_{t\geq0}$. Thus, the split component processes can perform $k$ kinds of jumps of amplitude $1,2,\dots,k$.  As the GCP $\{M(t)\}_{t\geq0}$ has independent and stationary increments and the division of jumps into two split component processes happens with independent coin flips, the split component processes $\{M_1(t)\}_{t\geq0}$ and $\{M_2(t)\}_{t\geq0}$ have independent and stationary increments. 

In an infinitesimal time-interval of length $h$, the jump probabilities of $\{M_1(t)\}_{t\geq0}$ are given by
\begin{align*}
	\mathrm{Pr}\{M_1(h)=1\}&=\sum_{j=1}^{k}\mathrm{Pr}\{M(h)=j\}\mathrm{Pr}\{M_1(h)=1|M(h)=j\}\\
	&=\sum_{j=1}^{k}\binom{j}{1}p (1-p)^{j-1}\lambda_jh+o(h).\\
	\mathrm{Pr}\{M_1(h)=2\}&=\sum_{j=2}^{k}\mathrm{Pr}\{M(h)=j\}\mathrm{Pr}\{M_1(h)=2|M(h)=j\}\\
	&	=\sum_{j=2}^{k}\binom{j}{2}p^2 (1-p)^{j-2}\lambda_jh+o(h).\\
	&\hspace*{.22cm}\vdots\\
	\mathrm{Pr}\{M_1(h)=k\}&=p^k\lambda_kh+o(h).
\end{align*} 
So, we have the following transition probabilities for the first component process:
\begin{equation}\label{transprb}
	\mathrm{Pr}\{M_1(t+h)=n_1+j_1|M_1(t)=n_1\}=\begin{cases}
		1-\sum_{j=1}^{k}\lambda_j (1-(1-p)^j)h+o(h),\, j_1=0,\vspace{.1cm}\\
		\sum_{j=j_1}^{k}\binom{j}{j_1}p^{j_1}(1-p)^{j-j_1}\lambda_jh +o(h),\, 1\leq j_1\leq k,\vspace{.1cm}\\
		o(h),\, \text{otherwise}.
	\end{cases}
\end{equation}	
Next, we obtain the system of differential equations that governs the state probabilities of the component process $\{M_1(t)\}_{t\geq0}$.
\begin{proposition}\label{prp1}
	The state probabilities $p(n,t)=\mathrm{Pr}\{M_1(t)=n\}$ satisfy the following system of differential equations:
	\begin{equation*}
		\frac{\mathrm{d}}{\mathrm{d}t}p(n,t)=\bigg(-\sum_{j=1}^{k}\lambda_j(1-(1-p)^j)\bigg)p(n,t)+\sum_{j=1}^{k}\sum_{j_1=1}^{j}\lambda_j\binom{j}{j_1}p^{j_1}(1-p)^{j-j_1}p(n-j_1,t),\, n\geq0,
	\end{equation*} 
	with initial conditions $p(0,0)=1$ and $p(n,0)=0$ for all $n \neq 0$.
\end{proposition}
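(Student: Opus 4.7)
The plan is to derive the system of differential equations by the standard forward Kolmogorov argument, taking the one-step infinitesimal transition probabilities in \eqref{transprb} as input. A preliminary point is that $\{M_1(t)\}_{t\ge0}$ has independent and stationary increments: this is inherited from the corresponding properties of $\{M(t)\}_{t\ge0}$ together with the fact that the coin tosses used at each jump are i.i.d.\ and independent of $\{M(t)\}_{t\ge0}$. The argument is essentially the one used for Type~I splitting in Proposition~\ref{prpindpst}, applied now to the block of $j$ flips attached to a single jump of size $j$, so that the pairs $(M_1(t_{l-1},t_l],M_2(t_{l-1},t_l])$ across disjoint intervals are conditionally, and hence unconditionally, independent.

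With stationary and independent increments available, I would condition on the state at time $t$ and apply the law of total probability, writing
\begin{equation*}
p(n,t+h)=\sum_{j_1=0}^{n} p(n-j_1,t)\,\mathrm{Pr}\{M_1(t+h)=n\mid M_1(t)=n-j_1\}.
\end{equation*}
Substituting the transition probabilities from \eqref{transprb} (and using that the $j_1>k$ contribution is only $o(h)$) then gives
\begin{equation*}
p(n,t+h)=p(n,t)\Bigl(1-\sum_{j=1}^{k}\lambda_j\bigl(1-(1-p)^j\bigr)h\Bigr)+\sum_{j_1=1}^{k}p(n-j_1,t)\sum_{j=j_1}^{k}\binom{j}{j_1}p^{j_1}(1-p)^{j-j_1}\lambda_j h+o(h),
\end{equation*}
with the convention that $p(n-j_1,t)=0$ when $n-j_1<0$.

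Transposing $p(n,t)$ to the left, dividing by $h$, and letting $h\to 0$ eliminates the $o(h)/h$ terms, and swapping the order of summation via $\sum_{j_1=1}^{k}\sum_{j=j_1}^{k}=\sum_{j=1}^{k}\sum_{j_1=1}^{j}$ puts the driving sum in exactly the form stated. The initial conditions follow from $M(0)=0$, which forces $M_1(0)=0$.

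I do not anticipate a real obstacle; once \eqref{transprb} is in hand the derivation is essentially mechanical. The only point that deserves an explicit line of justification is that contributions from jumps of size exceeding $k$ may be absorbed into $o(h)$: a single event of $\{M(t)\}_{t\ge0}$ contributes at most $k$ units, so binomial splitting cannot produce a piece larger than $k$, and the probability of two or more events of $\{M(t)\}_{t\ge0}$ in an interval of length $h$ is itself of order $o(h)$.
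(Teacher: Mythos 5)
Your proposal is correct and follows essentially the same route as the paper: decompose $p(n,t+h)$ via the independent and stationary increments of $\{M_1(t)\}_{t\ge0}$, substitute the infinitesimal transition probabilities \eqref{transprb}, rearrange, divide by $h$, and let $h\to0$, with the summation swap $\sum_{j_1=1}^{k}\sum_{j=j_1}^{k}=\sum_{j=1}^{k}\sum_{j_1=1}^{j}$ giving the stated form. Your explicit remarks on absorbing the $j_1>k$ terms into $o(h)$ and on the initial conditions are fine additions but do not change the argument.
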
	
\begin{proof}
	From independent and stationary increments of $\{M_1(t)\}_{t\geq0}$, we have
	\begin{equation*}
		p(n,t+h)=\sum_{j_1=0}^{k}p(n-j_1,t)p(j_1,h)+o(h).
	\end{equation*}
	On using \eqref{transprb}, we get
	{\small	\begin{equation*}
			p(n,t+h)=p(n,t)\left(1-\sum_{j=1}^{k}\lambda_j (1-(1-p)^j)h\right)+\sum_{j_1=1}^{k}\sum_{j=j_1}^{k}\binom{j}{j_1}p^{j_1}(1-p)^{j-j_1}p(n-j_1,t)\lambda_jh+o(h),
	\end{equation*}}
	which on rearranging the terms and letting $h\to0$ reduces to the required result.
\end{proof}
Thus, we observe that $\{M_1(t)\}_{t\geq0}$ is a GCP which performs $k$ kinds of jumps of size $j_1\in\{1,2,\dots,k\}$ with reduced rates $\sum_{j=j_1}^{k}\binom{j}{j_1}p^{j_1}(1-p)^{j-j_1}\lambda_j$.

Similarly, it can be shown that $\{M_2(t)\}_{t\geq0}$ is a GCP which performs $k$ kinds of jumps of size $j_2\in\{1,2,\dots, k\}$ with positive rates $\sum_{j=j_2}^{k}\binom{j}{j_2}(1-p)^{j_2}p^{j-j_2}\lambda_j$.

Let us consider a bivariate random process $\{(M_1(t), M_2(t))\}_{t\geq0}$ of the split component processes. In an infinitesimal time interval of length $h$, the joint transition probabilities of $\{M_1(t)\}_{t\geq0}$ and  $\{M_2(t)\}_{t\geq0}$ are 
\begin{multline}\label{jointransprb}
	\mathrm{Pr}\{M_1(t+h)=n_1+j_1,M_2(t+h)=n_2+j_2|M_1(t)=n_1,M_2(t)=n_2\}\\=\begin{cases}
		1-\sum_{j=1}^{k}\lambda_jh+o(h),\, j_1=j_2=0,\vspace{.1cm}\\
		\vspace{.1cm}
		\binom{j}{j_1}p^{j_1}(1-p)^{j_2}\lambda_{j}h+o(h),\, 1\leq j=j_1+j_2\leq k,\vspace{.1cm}\\
		o(h),\, \text{otherwise}.
	\end{cases}
\end{multline} 
\begin{proposition}
	The joint pmf $p(n_1,n_2,t)=\mathrm{Pr}\{M_1(t)=n_1,M_2(t)=n_2\},\, n_1\geq0,\,n_2\geq0$ solves the following differential equation:
	\begin{equation}\label{jointdeq}
		\frac{\mathrm{d}}{\mathrm{d}t}p(n_1,n_2,t)=-\sum_{j=1}^{k}\lambda_jp(n_1,n_2,t)+\sum_{j=1}^{k}\binom{j}{j_1}p^{j_1}(1-p)^{j_2}\lambda_{j}p(n_1-j_1,n_2-j_2,t),
	\end{equation}
	with initial conditions
	\begin{equation}\label{joindecond}
		p(n_1,n_2,0)=\begin{cases}
			1,\, n_1+n_2=0,\\
			0,\, n_1+n_2\neq0.
		\end{cases}
	\end{equation}
\end{proposition}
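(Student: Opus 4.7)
The plan is a standard forward Kolmogorov derivation for the bivariate process $\{(M_1(t), M_2(t))\}_{t\geq0}$, modeled on the proof of Proposition \ref{prp1}. First I would observe that because the Type II splitting is driven by independent coin flips that are independent of $\{M(t)\}_{t\geq0}$, the bivariate process inherits independent and stationary increments from the underlying GCP. This justifies writing
\[
p(n_1,n_2,t+h) = \sum_{j_1,j_2\geq 0} p(n_1-j_1,n_2-j_2,t)\,\mathrm{Pr}\{M_1(t,t+h]=j_1,\,M_2(t,t+h]=j_2\}.
\]

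Next I would substitute the joint one-step probabilities \eqref{jointransprb}, isolating the $(j_1,j_2)=(0,0)$ term, grouping the contributions with $1\leq j_1+j_2\leq k$, and folding the remaining transitions into $o(h)$. This gives
\[
p(n_1,n_2,t+h) = p(n_1,n_2,t)\Bigl(1-\sum_{j=1}^{k}\lambda_j h\Bigr) + \sum_{j=1}^{k}\sum_{\substack{j_1+j_2=j\\(j_1,j_2)\neq(0,0)}}\binom{j}{j_1}p^{j_1}(1-p)^{j_2}\lambda_j\,p(n_1-j_1,n_2-j_2,t)\,h + o(h).
\]
Subtracting $p(n_1,n_2,t)$, dividing by $h$, and sending $h\to 0$ eliminates the $o(h)/h$ term and reproduces \eqref{jointdeq} (interpreting the implicit inner summation over $(j_1,j_2)$ with $j_1+j_2=j$, including the boundary cases $j_1=0$ or $j_2=0$ which contribute marginal terms to the recursion).

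The initial condition \eqref{joindecond} is immediate: at $t=0$ no jump has occurred in either split component, so $p(0,0,0)=1$ and $p(n_1,n_2,0)=0$ whenever $n_1+n_2>0$.

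The only real obstacle I anticipate is justifying that the bivariate process $\{(M_1(t),M_2(t))\}_{t\geq0}$ has independent and stationary increments, which is what allows the Markovian one-step recursion above. Since the argument of Proposition \ref{prpindpst} was for Type I splitting (where the coin is flipped \emph{once} per jump), one should verify that the same reasoning goes through here, where the coin is flipped $j$ times per $j$-size jump. The verification is essentially identical: disjoint time intervals of $\{M(t)\}_{t\geq0}$ yield independent jumps, and the families of coin flips applied to jumps in disjoint intervals are themselves independent and identically distributed in law, so the joint increment vector $(M_1(t_{l-1},t_l],\,M_2(t_{l-1},t_l])$ depends only on $t_l-t_{l-1}$ and is independent across $l$. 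Once this structural fact is noted, the rest of the proof is routine bookkeeping with the one-step probabilities \eqref{jointransprb}.
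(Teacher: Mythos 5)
Your proposal is correct and follows essentially the same route as the paper: the paper's own proof simply invokes the joint transition probabilities \eqref{jointransprb} and repeats the one-step Kolmogorov argument of Proposition \ref{prp1}, which is exactly what you carry out (with the added, and welcome, care about the implicit inner sum over $j_1+j_2=j$ and about the bivariate process inheriting independent and stationary increments).
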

\begin{proof}On using \eqref{jointransprb} and following the similar lines of the proof of Proposition $\ref{prp1}$, we get the required result.
\end{proof}
\begin{proposition}
	The joint pgf $G(u,v,t)=\mathbb{E}\big(u^{M_1(t)}v^{M_2(t)}\big)$, $|u|\leq1,\,|v|\leq1$ solves 
	\begin{equation}\label{joipgf}
		\frac{	\partial}{\partial t}G(u,v,t)=-G(u,v,t)\sum_{j=1}^{k}\lambda_j\left(1-\binom{j}{j_1}p^{j_1}(1-p)^{j_2}u^{j_1}v^{j_2}\right),
	\end{equation}
	with $G(u,v,0)=1$.
\end{proposition}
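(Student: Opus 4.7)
The plan is the standard pgf-from-pmf derivation, which splits naturally into two parts: checking the initial condition, and turning the forward equation \eqref{jointdeq} into a PDE for $G(u,v,t)$.

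For the initial condition, I would expand $G(u,v,0)=\sum_{n_1\ge 0}\sum_{n_2\ge 0}u^{n_1}v^{n_2}p(n_1,n_2,0)$ and apply \eqref{joindecond}: only the term $n_1=n_2=0$ survives and contributes $1$, so $G(u,v,0)=1$.

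For the PDE, I would multiply both sides of \eqref{jointdeq} by $u^{n_1}v^{n_2}$ and sum over $n_1,n_2\ge 0$. Since $|u|,|v|\le 1$ and all probabilities are nonnegative, Fubini/dominated convergence justifies interchanging the sums with the time derivative on the left and with the finite $j$-sums on the right. The left side becomes $\partial_t G(u,v,t)$. The first term on the right becomes $-(\sum_{j=1}^{k}\lambda_j)\,G(u,v,t)$. For the second term, for each fixed $j\in\{1,\dots,k\}$ and each decomposition $j=j_1+j_2$, the shift identity
\begin{equation*}
\sum_{n_1\ge j_1}\sum_{n_2\ge j_2}u^{n_1}v^{n_2}p(n_1-j_1,n_2-j_2,t)=u^{j_1}v^{j_2}\,G(u,v,t)
\end{equation*}
(obtained by reindexing $m_i=n_i-j_i$) factors out a clean $u^{j_1}v^{j_2}G(u,v,t)$. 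Collecting these contributions yields exactly $\sum_{j=1}^{k}\lambda_j\binom{j}{j_1}p^{j_1}(1-p)^{j_2}u^{j_1}v^{j_2}G(u,v,t)$, and combining with the $-(\sum_j\lambda_j)G$ term produces \eqref{joipgf}.

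There is no real obstacle here; the only items requiring any care are (i) justifying the interchange of derivative and infinite summation, which follows from the boundedness of $G$ on the closed polydisc together with the fact that the forward equation's right-hand side is itself absolutely summable, and (ii) keeping the implicit convention that in the second sum on the right of \eqref{jointdeq} the index $j_1$ (with $j_2=j-j_1$) is the genuine summation variable for each $j$, so that the binomial coefficient in \eqref{joipgf} inherits the same implicit sum. Once these conventions are fixed, the proof reduces to one line of formal manipulation, and so I would present it as such.
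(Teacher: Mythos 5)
Your proposal is correct and follows essentially the same route as the paper: verify $G(u,v,0)=1$ from the initial condition \eqref{joindecond}, then multiply \eqref{jointdeq} by $u^{n_1}v^{n_2}$, sum over $n_1,n_2\geq 0$, and use the index shift to factor out $u^{j_1}v^{j_2}G(u,v,t)$. The only difference is that you spell out the interchange-of-limits justification and the shift identity, which the paper leaves implicit.
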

\begin{proof}
	On using \eqref{joindecond}, we get $G(u,v,0)=\sum_{n_1=0}^{\infty} \sum_{n_2=0}^{\infty}u^{n_1}v^{n_2}p(n_1,n_2,0)=1$. On multiplying \eqref{jointdeq} with $u^{n_1}v^{n_2}$ and summing over the range of $n_1\geq0$ and $n_2\geq 0$, we get the required result.
\end{proof}
On solving \eqref{joipgf}, we get
\begin{equation*}\label{jopgf}
	G(u,v,t)=\exp\left(-\sum_{j=1}^{k}\lambda_jt+\sum_{j=1}^{k}\binom{j}{j_1}(up)^{j_1}(v(1-p))^{j_2}\lambda_{j}t\right).
\end{equation*}
So, 
\begin{align*}
	\mathbb{E}(M_1(t))&=\frac{\partial}{\partial u}G(u,v,t)\Big|_{u=v=1}=\sum_{j=1}^{k}\binom{j}{j_1}p^{j_1}(1-p)^{j_2}j_1\lambda_{j}t,\\
	\mathbb{E}(M_2(t))&=\frac{\partial}{\partial v}G(u,v,t)\Big|_{u=v=1}=\sum_{j=1}^{k}\binom{j}{j_1}p^{j_1}(1-p)^{j_2}j_2\lambda_{j}t,\\
	\mathbb{E}(M_1(t)M_2(t))&=\frac{\partial^2}{\partial u\partial v}G(u,v,t)\Big|_{u=v=1}\\
	&=\sum_{j=1}^{k}\binom{j}{j_1}p^{j_1}(1-p)^{j_2}j_1\lambda_{j}t\bigg(j_2+\sum_{j=1}^{k}\binom{j}{j_1}p^{j_1}(1-p)^{j_2}j_2\lambda_{j}t\bigg).
\end{align*}
Thus, the covariance of $M_1(t)$ and $M_2(t)$ is  
\begin{equation*}
	\operatorname{Cov}(M_1(t),M_2(t))=\mathbb{E}(M_1(t)M_2(t))-\mathbb{E}(M_1(t))\mathbb{E}(M_2(t))=\sum_{j=1}^{k}\binom{j}{j_1}p^{j_1}(1-p)^{j_2}j_1j_2\lambda_jt,
\end{equation*}
which is zero only if $j_1=0$
or $j_2=0$. Therefore, in this type of splitting the split components are not independent. They are independent provided there is splitting of jumps as discussed in Section $\ref{section2}$.

\subsubsection{Splitting jumps on rolling a $q$-faced die}  In this case, we split the jumps of GCP $\{M(t)\}_{t\geq0}$ into $q\geq2$ components with the rolls of a $q$-faced die. Whenever a jump of size $j\in\{1,2,\dots,k\}$ gets register in the process $\{M(t)\}_{t\geq0}$, we roll a $q$-faced die $j$ many times. If $i\in\{1,2,\dots,q\}$ appears $j_i$ many times in $j$ rolls of the die, such that $\sum_{i=1}^{q}j_i=j$,  then a $j_i$-size jump gets  register in the split component process $\{M_i(t)\}_{t\geq0}$. Further, similar to the above discussed cases, we assume that the outcomes on $j$ rolls of this $q$-faced die are independent of $\{M(t)\}_{t\geq0}$. Let $p_i$ be the probability that $i$ appears on the die. Then, in an infinitesimal time-interval of length $h$, such that, $o(h)/h\to0$ as $h\to0$, the transition probabilities of $\{M_i(t)\}_{t\geq0}$ are given by
\begin{equation*}
	\mathrm{Pr}\{M_i(t+h)=n_i+j_i|M_i(t)=n_i\}=\begin{cases}
		1-\sum_{j=1}^{k}\lambda_j\bigg(1-\sum_{\Theta_i}\prod_{\substack{r=1\\r\neq i}}^{q}\frac{j!}{j_r!}p_r^{j_r}\bigg)h+o(h),\, j_i=0,\vspace{.1cm}\\
		\sum_{j=j_i}^{k}\sum_{\Theta'}\prod_{r=1}^{q}\frac{j!}{j_r!}p_r^{j_r}\lambda_jh+o(h), \, 1\leq j_i\leq k,\vspace{.1cm}\\
		o(h),\, \text{otherwise},
	\end{cases}
\end{equation*}
where $\Theta_i=\bigg\{(j_1,j_2,\dots,j_{i-1},j_{i+1},\dots,j_q):\sum_{\substack{r=1,\\r\neq i}}^{q}j_r=j,\, j_r\geq 0\bigg\}$ and $\Theta' =\{(j_1,j_2,\dots,j_q):\sum_{r=1}^{q}j_r=j,\, j_r\geq0\}$. 

As the process $\{M(t)\}_{t\geq0}$ has independent and stationary increments and the splitting of jumps into $q$ components is also independent, therefore, $\{M_i(t)\}_{t\geq0}$ for each $i\in\{1,2,\dots,q\}$ has independent and stationary increments. Thus, $\{M_i(t)_{t\geq0}\}$ is a GCP which performs $k$ kinds of jump of size $j_i\in\{1,2,\dots,k\}$ with positive rates $\sum_{j=j_i}^{k}\sum_{\Theta'}\prod_{r=1}^{q}\frac{j!}{j_r!}p_r^{j_r}\lambda_j$.

\begin{remark}
	In an infinitesimal time interval of length $h$, the $q$-variate random process $\{(M_1(t)$, $M_2(t))$, $\dots$, $M_q(t)\}_{t\geq0}$ of split components has following transition probabilities:
	\begin{align}\label{transqjoin}
		\mathrm{Pr}\bigg(\cap_{i=1}^{q}    \{M_i(t+h)&=n_i+j_i\} \big|\cap_{i=1}^{q}\{M_i(t)=n_i\}\bigg)\nonumber\\
		&=\begin{cases}
			1-\sum_{j=1}^{k}\lambda_jh+o(h),\, \sum_{i=1}^{q}j_i=0,\vspace{.1cm}\\
			\sum_{\Theta'}\prod_{r=1}^{q}\frac{j!}{j_r!}p_r^{j_r}\lambda_jh+o(h), \, 1\leq j=\sum_{i=1}^{q}j_i\leq k,\vspace{.1cm}\\
			o(h),\, \text{otherwise},
		\end{cases}
	\end{align}  
	where $\Theta' =\{(j_1,j_2,\dots,j_q):\sum_{r=1}^{q}j_r=j,\, j_r\geq0\}$. 
\end{remark}
Let  
$\bar{n}=(n_1,n_2,\dots,n_q)$,  $\bar{u}=(u_1,u_2,\dots,u_q)$ and $\bar{j}=(j_1,j_2,\dots,j_q)$ be $q$-tuple row vectors. Let $\bar{0}=(0,0,\dots,0)$  and $\bar{1}=(1,1,\dots,1)$ be $q$-tuple row vectors. Further, $\bar{n}\neq \bar{0}$ implies that $n_i\neq 0$ for atleast one $1\leq i\leq q$. 
\begin{proposition}
	The joint pmf $p(\bar{n},t)=\mathrm{Pr}\{M_1(t)=n_1,M_2(t)=n_2,\dots,M_q(t)=n_q\}$, $n_i\geq0$ for all $i=\{1,2,\dots,q\}$ satisfies the following differential equation:
	\begin{equation*}
		\frac{\mathrm{d}}{\mathrm{d}t}p(\bar{n},t)=-\sum_{j=1}^{k}\lambda_jp(\bar{n},t)+\sum_{j=1}^{k}\sum_{\Theta'}\prod_{r=1}^{q}\frac{j!}{j_r!}p_r^{j_r}\lambda_jp(\bar{n}-\bar{j},t),
	\end{equation*}
	with initial conditions 
	\begin{equation*}
		p(\bar{n},0)=\begin{cases}
			1,\ \bar{n}=\bar{0},\\
			0,\ \bar{n}\neq \bar{0}.
		\end{cases}
	\end{equation*}
\end{proposition}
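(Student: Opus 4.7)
The plan is to derive the forward Chapman--Kolmogorov equation for the $q$-variate process $\{(M_1(t),\ldots,M_q(t))\}_{t\ge0}$, paralleling the argument used for the bivariate case leading to \eqref{jointdeq}. The joint transition probabilities \eqref{transqjoin} are already in hand, and the $q$-variate split process inherits independent and stationary increments from $\{M(t)\}_{t\ge0}$ together with the independence of the die rolls, exactly as established for each marginal split component in Section~5.2.

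The first step is to condition on the configuration $\bar{n}-\bar{j}$ at time $t$ and, using independent and stationary increments, write
\begin{equation*}
p(\bar{n},t+h)=\sum_{\bar{j}}p(\bar{n}-\bar{j},t)\,\mathrm{Pr}\Big\{\cap_{i=1}^{q}\{M_i(t,t+h]=j_i\}\Big\}+o(h),
\end{equation*}
where the outer sum runs over all nonnegative tuples $\bar{j}$ with $\bar{j}\le\bar{n}$ componentwise. Next, insert the transition probabilities \eqref{transqjoin}: the $\bar{j}=\bar{0}$ term contributes $p(\bar{n},t)\bigl(1-\sum_{j=1}^{k}\lambda_j h\bigr)$; for each $j\in\{1,\ldots,k\}$ the tuples $\bar{j}\in\Theta'$ (i.e.\ with $\sum_r j_r=j$) contribute $p(\bar{n}-\bar{j},t)\prod_{r=1}^{q}\frac{j!}{j_r!}p_r^{j_r}\lambda_j h$; all other $\bar{j}$ contribute at most $o(h)$. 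Subtract $p(\bar{n},t)$ from both sides, divide by $h$, and let $h\to 0$ to obtain the claimed system of ODEs.

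The initial conditions follow immediately from $M_i(0)=0$ for every $i\in\{1,\ldots,q\}$, which forces $p(\bar{0},0)=1$ and $p(\bar{n},0)=0$ otherwise. There is no real analytic obstacle: the derivation is mechanical once \eqref{transqjoin} is in place. The main care required is the multi-index bookkeeping, in particular verifying that the multinomial weight $\prod_{r=1}^{q}\frac{j!}{j_r!}p_r^{j_r}$ correctly accounts for every way of distributing a size-$j$ jump of $\{M(t)\}_{t\ge0}$ among the $q$ split components; this is simply the joint distribution of $j$ independent rolls of the $q$-faced die, and it is precisely what makes \eqref{transqjoin} hold.
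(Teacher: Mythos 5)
Your proposal is correct and follows essentially the same route as the paper, which simply invokes the transition probabilities \eqref{transqjoin} and repeats the Chapman--Kolmogorov forward-equation argument of Proposition \ref{prp1} (condition on the state at time $t$, insert the infinitesimal transition probabilities, subtract $p(\bar{n},t)$, divide by $h$, and let $h\to0$). Your write-up is in fact more explicit than the paper's about the multi-index bookkeeping, but there is no substantive difference in method.
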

\begin{proof}
	On using \eqref{transqjoin} and following along the similar lines of the proof of Proposition \ref{prp1}, we obtain the required result.
\end{proof}
Its joint pgf  $G(\bar{u},t)=\mathbb{E}\bigg(\prod_{i=1}^{q}u_i^{M_i(t)}\bigg)$, $|u_i|\leq 1$, $i=\{1,2,\dots,q\}$ of the vector $(M_1(t)$, $M_2(t)$, $\dots$, $M_q(t))$ solves 
\begin{equation}\label{joinqpgff}
	\frac{\partial}{\partial t}G(\bar{u},t)=-G(\bar{u},t)\sum_{j=1}^{k}\lambda_j\bigg(1-\sum_{\Theta'}\prod_{r=1}^{q}\frac{j!}{j_r!}p_r^{j_r}u_r^{j_r}\bigg),
\end{equation}
with initial condition $G(\bar{u},0)=1$.

On solving \eqref{joinqpgff}, we get
\begin{equation*}
	G(\bar{u},t)=\exp\bigg(-\sum_{j=1}^{k}\lambda_jt+\sum_{j=1}^{k}\sum_{\Theta'}\prod_{r=1}^{q}\frac{j!}{j_r!}(p_ru_r)^{j_r}\bigg).
\end{equation*}

Now, for any $1\leq x\neq y\leq q$, we have
\begin{align*}
	\mathbb{E}(M_x(t))&=\frac{\partial}{\partial u_x}G(\bar{u},t)\big|_{\bar{u}=\textbf{1}}=\sum_{j=1}^{k}\sum_{\Theta'}\prod_{r=1}^{q}\frac{j!}{j_r!}(p_r)^{j_r}j_x,\\
	\mathbb{E}(M_y(t))&=\frac{\partial}{\partial u_y}G(\bar{u},t)\big|_{\bar{u}=\textbf{1}}=\sum_{j=1}^{k}\sum_{\Theta'}\prod_{r=1}^{q}\frac{j!}{j_r!}(p_r)^{j_r}j_y,\\
	\mathbb{E}(M_x(t)M_y(t))&=\frac{\partial^2}{\partial u_x\partial u_y}G(\bar{u},t)\big|_{\bar{u}=\textbf{1}}\\ &=\sum_{j=1}^{k}\sum_{\Theta'}\prod_{r=1}^{q}\frac{j!}{j_r!}(p_r)^{j_r}j_xj_y+\sum_{j=1}^{k}\sum_{\Theta'}\prod_{r=1}^{q}\frac{j!}{j_r!}(p_r)^{j_r}j_x\sum_{j=1}^{k}\sum_{\Theta'}\prod_{r=1}^{q}\frac{j!}{j_r!}(p_r)^{j_r}j_y.
\end{align*}
Thus, the covariance of $M_x(t)$ and $M_y(t)$ is given by
\begin{equation*}
	\operatorname{Cov}(M_x(t),M_y(t))=\mathbb{E}(M_x(t)M_y(t))-\mathbb{E}(M_x(t))\mathbb{E}(M_y(t))=\sum_{j=1}^{k}\sum_{\Theta'}\prod_{r=1}^{q}\frac{j!}{j_r!}(p_r)^{j_r}j_xj_y.
\end{equation*}
As the covariance of $M_x(t)$ and $M_y(t)$ is zero only if $j_x=0$ or $j_y=0$, therefore, in this case the split components are not pairwise independent. Thus, the split components in this type of splitting are not independent. 

\section{Applications}
Here, we give applications of the merging and splitting of GCPs. First, we discuss an application of the merging of independent GCPs. 
\subsection{Application in industrial fishing problem}
A fishing company catches fishes from a sea for the commercial purpose. Suppose there are $N$ types of fishes in the sea and the total number of fishes of type $i\in\{1,2,\dots,N\}$ is $N_i$ such that $N_1\leq N_2\leq\dots\leq N_N$. Let the capacity of fishing net used by a company be $K$. With the assumption that in a sufficiently small time interval, a maximum of $k_i\leq N_i$ many fishes of type $i$ can be caught and the rates of catching $1,2,\dots, k_i$ many fishes of type $i$ are $\lambda_1^{(i)},\lambda_2^{(i)},\dots,\lambda_{k_i}^{(i)}$, respectively. Let us consider a situation in which the possibility of simultaneous catching of different types of fishes is negligible. Assume the following condition on the catching capacities of different types of fishes: the catching of different types of fishes are independent of each other and $k_i\leq k_{i+1}$ for all $i=1,2,\dots,N-1$. This kind of industrial fishing can be modeled using merging of finitely many independent GCPs, where the catching of $i$ type fishes is modeled using a GCP.  

Let $\{M_i(t)\}_{t\geq0}$ be a GCP that models the catching of fishes of type $i$ and their merged process $\{\mathscr{M}^N(t)\}_{t\geq0}$ models the fishing process. The quantity of our interest is the expected number of fishes caught by time $t$, that is, $\mathbb{E}(\mathscr{M}^N(t))$. Let $A_i(t)$ be the number of fishes of type $i$ that are caught by time $t$. So, the expected value $\mathbb{E}(A_i(t))$ and $\mathbb{E}(A_i(t)|\mathscr{A}^q(t)=b)$ are also of interest.
At time $t=0$ there is no fish in the net, that is, $\mathscr{M}^N(0)=0$. Also, the rates
$\beta_j$'s of $\{\mathscr{M}^N(t)\}_{t\geq0}$ are given in \eqref{betaqjs} with $q=N$. 
So,
$
\mathbb{E}(\mathscr{M}^N(t))=\sum_{j=1}^{k_{N}}j\beta_j t$. From \eqref{Atpmf}, the expected number of fishes of type $i$ caught by time $t$ is $\mathbb{E}(A_i(t))=\sum_{j=1}^{k_i}\lambda_j^{(i)}t=\lambda^{(i)}t$. Also, from  \eqref{binopmf}, we have
\begin{equation*}
	\mathbb{E}(A_i(t)|\mathscr{A}^q(t)=b)
	=\frac{b\lambda^{(i)}}{\sum_{j=1}^{q}\lambda^{(j)}}.
\end{equation*}
So, if till time $t>0$ the company is successful in catching fishes $b$ times then on an average $b\lambda^{(i)}/\sum_{j=1}^{q}\lambda^{(j)}$ number of times fishes of type $i$ are being caught.  

\subsection{Application to hotel booking management system}
Here, we discuss an application of the splitting of GCP to hotel booking management system.

Let us consider a room booking management system in a hotel in which multiple bookings are allowed at a time. Let $N$ be the total number of rooms available in the hotel and at any instant a maximum of $k\leq N$ many rooms can be booked. Let $\lambda_j$, $j\in\{1,2,\dots,k\}$ be the rates of booking of $j$ many rooms by an individual. If at time $t=0$ no room is booked and $M(t)$ denotes the total number of rooms booked upto time $t\geq0$. Then, the GCP $\{M(t)\}_{t\geq0}$  with booking rates $\lambda_j$ can be used to model such a booking management system.
Further, let us assume  the hotel provides three different types of rooms, {\it viz.}, type 1 - standard room, type 2 - deluxe room and type 3 - suite. Let $N_i$, $i\in\{1,2,3\}$ be the total number of type $i$ rooms available in the hotel. Let $p_i$ be the probability of booking of type $i$ for all $i$. At any instant a maximum of $k_i\leq N_i<N$, $i=1,2,3$ many bookings can be done for type $i$ rooms. 

\paragraph{\it Case I}{\bf Similar type of rooms booking.} At any given instant, an individual books multiple rooms of same type, that is, the chance of simultaneous bookings of different types of rooms by an individual is negligible. Let $M_i(t)$ denotes the total number of booked rooms of type $i$ upto time $t$. Then, $\{M_i(t)\}_{t\geq0}$, $i=1,2,3$ is the process that can be used to model the booking of type $i$ rooms. Thus, the process  $\{M_i(t)\}_{t\geq0}$ is {\it i}th split component of the original process $\{M(t)\}_{t\geq0}$ which is a GCP. Let $\lambda_{j_i}^{(i)}$ be the rates of $j_i\in\{1,2,\dots,k_i\}$ bookings of type $i$ rooms. Then, for $i=1,2,3$, we have
\begin{equation*}
	\lambda_{j_i}^{(i)}=
	\begin{cases}
		p_i\lambda_{j_i},\, 1\leq j_i\leq k_i,\\
		0,\, j>k_i.
	\end{cases}
\end{equation*}
Here, at any particular time $t>0$, one is interested in knowing the expected number $\mathbb{E}(M(t))$ of total bookings in hotel and the expected number $\mathbb{E}(M_i(t))$ of booking of type $i$ rooms. From \eqref{meanvar}, we have
\begin{equation*}
	\mathbb{E}(M(t))=\sum_{j=1}^{k}j\lambda_jt\ \   \text{and} \ \  \mathbb{E}(M_i(t))=\sum_{j_i=1}^{k_i}j_i\lambda_{j_i}^{(i)}t,\, t\geq0.
\end{equation*}

\paragraph{\it Case II}{\bf Multiple types of rooms booking.} At any given instant, an individual may book different types of rooms. If $M_i(t)$ is the total number of bookings of type $i$ rooms by time $t$. Then, the bookings of type $i$ rooms can be modeled using the process $\{M_i(t)\}_{t\geq0}$ which is the {\it i}th split component of the process $\{M(t)\}_{t\geq0}$. The booking rates $\lambda_{j_i}^{(i)}$ for $j_i\in\{1,2,\dots,k_i\}$ rooms of type $i$ are 
\begin{equation*}
	\lambda_{j_i}^{(i)}=\begin{cases}
		\sum_{j=j_i}^{k}\sum_{\Theta'}\prod_{i=1}^{3}\frac{j!}{j_i!}p_i^{j_i}\lambda_j, \, 1\leq j_i\leq k_i,\vspace{.1cm}\\
		o(h),\, \text{otherwise},
	\end{cases}
\end{equation*}
where $\Theta' =\{(j_1,j_2,j_3):\sum_{r=1}^{3}j_r=j,\, j_r\geq0\}$. 
Thus, the expected number of total bookings is $\mathbb{E}(M(t))=\sum_{j=1}^{k}j\lambda_j t$ and the expected number of total bookings of type $i$ is $\mathbb{E}(M_i(t))=\sum_{j_i=1}^{k_i}j_i\lambda_{j_i}^{(i)} t$, $i=1,2,3$.

	\end{document}